\newtheorem{theorem}{Theorem}[section]
\newtheorem{lemma}[theorem]{Lemma}
\newtheorem{corollary}[theorem]{Corollary}
\theoremstyle{definition}
\newtheorem{definition}[theorem]{Definition}
\newtheorem{example}[theorem]{Example}
\theoremstyle{remark}
\newtheorem{remark}[theorem]{Remark}
\numberwithin{equation}{section}
\newcommand{\mc}[1]{\mathcal{#1}}
\newcommand{\Sp}{{\mathrm{Sp}}}
\newcommand{\SL}{{\mathrm{SL}}}
\newcommand{\SLc}{{\mathrm{SL^c}}}
\newcommand{\Hom}{{\mathrm{Hom}}}
\newcommand{\Z}{{\mathbb{Z}}}
\newcommand{\GW}{{\mathrm{GW}}}
\newcommand{\A}{\mathbb{A}}
\newcommand{\SSp}{\mathbb{S}}
\newcommand{\Th}{{Th}}
\newcommand{\thc}{{th}}
\newcommand{\id}{\operatorname{id}}
\newcommand{\Cone}{\operatorname{Cone}}
\newcommand{\triv}{\mathbf{1}}
\newcommand{\Gm}{{\mathbb{G}_m}}
\newcommand{\GL}{\mathrm{GL}}
\newcommand{\SH}{\mathcal{SH}}
\newcommand{\Ht}{\mathrm{H}}
\newcommand{\T}{\mathrm{T}}
\newcommand{\Spec}{\operatorname{Spec}}
\newcommand{\Sm}{\mathcal{S}m}
\newcommand{\cT}{\mathcal{T}}
\newcommand{\et}{\mathrm{\acute{e}t}}
\newcommand{\etale}{\'etale }
\newcommand{\DMW}{\mathrm{\widetilde{DM}}}
\newcommand{\DMWk}{\mathrm{\widetilde{DM}}(k)}
\newcommand{\DM}{\mathrm{DM}}
\newcommand{\DMk}{\mathrm{DM}(k)}
\newcommand{\DAS}{\mathrm{D}_{\A^1}(S)}
\newcommand{\ZS}{\mathrm{H}_{\mathbb{A}^1}\mathbb{Z}}
\newcommand{\ZStop}{\mathrm{H}_{\mathrm{top}}\mathbb{Z}}
\newcommand{\constf}{\mathcal{C}onst}
\newcommand{\etatop}{\eta_{\mathrm{top}}}
\newcommand{\SmS}{\mathcal{S}m_S}
\newcommand{\SHS}{\mathcal{SH}(S)}
\newcommand{\SHk}{\mathcal{SH}(k)}
\newcommand{\Smk}{\mathcal{S}m_k}
\newcommand{\motive}{\mathcal{M}}
\newcommand{\emclane}{\mathrm{EM}}
\newcommand{\homol}{\mathrm{C}}
\newcommand{\HZtop}{\mathrm{H_{top}}\mathbb{Z}}
\newcommand{\HM}{\mathrm{H_M}}
\newcommand{\HMW}{\mathrm{H_{MW}}}
\newcommand{\HAZ}{\mathrm{H}_{\mathbb{A}^1}\mathbb{Z}}
\newcommand{\mcE}{\mathcal{E}}
\newcommand{\Spt}{\mathrm{Spt}}
\newcommand{\pour}{\ar@{}[ur]|(0.2){\text{\pigpenfont G}}}
\newcommand{\podr}{\ar@{}[dr]|(0.2){\text{\pigpenfont A}}}
\newcommand{\leftrarrows}{\mathrel{\raise.75ex\hbox{\oalign{%
				$\scriptstyle\leftarrow$\cr
				\vrule width0pt height.5ex$\hfil\scriptstyle\relbar$\cr}}}}
\newcommand{\lrightarrows}{\mathrel{\raise.75ex\hbox{\oalign{%
				$\scriptstyle\relbar$\hfil\cr
				$\scriptstyle\vrule width0pt height.5ex\smash\rightarrow$\cr}}}}
\newcommand{\Rrelbar}{\mathrel{\raise.75ex\hbox{\oalign{%
				$\scriptstyle\relbar$\cr
				\vrule width0pt height.5ex$\scriptstyle\relbar$}}}}
\def\leftrightarrowsfill@{\arrowfill@\leftrarrows\Rrelbar\lrightarrows}
\newcommand{\xleftrightarrows}[2][]{\ext@arrow 3399\leftrightarrowsfill@{#1}{#2}}
\begin{document}

\title{Thom isomorphisms in triangulated motivic categories}


\author{Alexey Ananyevskiy}
\address{St. Petersburg Department, Steklov Math. Institute, Fontanka 27, St. Petersburg 191023 Russia}
\email{alseang@gmail.com}


\date{}

\thanks{The research is supported by Young Russian Mathematics award, by RFBR grants~18-31-20044 and~19-01-00513 and by a grant from the Government of the Russian Federation, agreement~075-15-2019-1620}

\begin{abstract}
	We show that a triangulated motivic category admits categorical Thom isomorphisms for vector bundles with an additional structure if and only if the generalized motivic cohomology theory represented by the tensor unit object admits Thom classes. We also show that the stable $\A^1$-derived category does not admit Thom isomorphisms for oriented vector bundles and, more generally, for symplectic bundles. In order to do so we compute the first homology sheaves of the motivic sphere spectrum and show that the class in the coefficient ring of $\A^1$-homology corresponding to the second motivic Hopf map $\nu$ is nonzero which provides an obstruction to the existence of a reasonable theory of Thom classes in $\A^1$-cohomology.
\end{abstract}

\maketitle

\section{Introduction}

With the development of the theory of motivic complexes nowadays there is a number of triangulated and stable $\infty$-categories of motivic nature which allow one to study various cohomological properties of algebraic varieties. Most of these categories arise via the following two constructions.
\begin{itemize}
	\item 
	Let $\mathrm{Cor}_S^{\mc{T}}$ be a category of correspondences. Then the associated triangulated motivic category is the category of complexes of sheaves of abelian groups (or the category of $S^1$-spectra of simplicial sets) over $\mathrm{Cor}_S^{\mc{T}}$ with inverted $\A^1$-equivalences and inverted $\Gm$-suspension functor.
	\item 
	Let $A\in\SH(S)$ be a highly structured commutative ring spectrum. Then the associated triangulated motivic category is the homotopy category of modules over $A$.
\end{itemize}
These constructions are closely related, see \cite[Theorem~1]{RO08}, \cite{EK17} and \cite[Theorem~5.3]{G19}. Both the constructions exhibit a tensor triangulated category $\mc{T}$ together with an adjunction
\[
\gamma^*_\cT\colon \SHS \leftrightarrows \cT\colon \gamma_*^\cT
\]
with $\gamma_\cT^*$ being strict monoidal. Here $\SHS$ is the stable motivic homotopy category introduced by Morel and Voevodsky \cite{MV99,V98} which is a universal triangulated motivic category. We refer to such tensor triangulated categories with an adjunction as triangulated motivic categories over $S$.

One may study triangulated motivic categories $\mc{T}$ over $S$ combining geometrical constructions carried out in $\SHS$ with the properties of the bigraded cohomology theory represented by 
\[
\Ht_{\mc{T}} \Z= \gamma_*^\cT\gamma^*_\cT \SSp \in \SHS
\]
where $\SSp\in\SHS$ is the sphere spectrum. In the current paper we address the question of the existence of Thom isomorphisms and Thom classes for vector bundles possibly carrying an additional structure of a symplectic bundle (a symplectic form), an $\SL$-bundle (a trivialization of the determinant) or an $\SLc$-bundle (a choice of the square root of the determinant). In particular we show the following.
\begin{theorem}[Theorem~\ref{thm:thom_iso_equiv}]
	For a triangulated motivic category $\mc{T}$ over $S$ the following are equivalent (see Definitions~\ref{def:motive_Thom}, \ref{def:coh_Thom} and~\ref{def:thom_cat} for the notation):
	\begin{enumerate}
		\item $\cT$ admits normalized Thom isomorphisms
		\[
		\motive_{\cT}(\Th(\mc{E}))\cong \Sigma_{\T}^n \motive_{\cT}(X)
		\]		
		for vector bundles (resp. symplectic bundles, vector $SL$-bundles, vector $SL^c$-bundles),
		\item the cohomology theory represented by the spectrum $\Ht_\cT \Z\in \SHS$ admits normalized Thom classes
		\[
		\thc(\mcE)\in (\Ht_\cT \Z)^{2n,n}(\Th(\mcE))
		\]
		for vector bundles (resp. symplectic bundles, vector $SL$-bundles, vector $SL^c$-bundles).
	\end{enumerate}
\end{theorem}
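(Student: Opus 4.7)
The plan is to exploit the adjunction $\gamma^*_\cT \dashv \gamma_*^\cT$ to translate between morphisms in $\cT$ and classes in the cohomology theory represented by $\Ht_\cT\Z$. For any pointed motivic space $Y$ over $S$, adjunction gives
\[
(\Ht_\cT\Z)^{2n,n}(Y) = \Hom_{\SHS}(\Sigma^\infty Y, \Sigma^n_\T \gamma_*^\cT\gamma^*_\cT \SSp) \cong \Hom_\cT(\motive_\cT(Y), \Sigma^n_\T \tensorunit_\cT),
\]
so a Thom class on $\mcE$ is exactly a morphism $t_\mcE\colon \motive_\cT(\Th(\mcE)) \to \Sigma^n_\T \tensorunit_\cT$, and the normalization condition on one side matches the other.

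For the direction (1)$\Rightarrow$(2), start with a normalized Thom isomorphism $\phi_\mcE\colon \motive_\cT(\Th(\mcE)) \xrightarrow{\simeq} \Sigma^n_\T \motive_\cT(X)$ and compose with the augmentation $\Sigma^n_\T \motive_\cT(X) \to \Sigma^n_\T \tensorunit_\cT$ induced by $X\to S$ to extract a class $\thc(\mcE) \in (\Ht_\cT\Z)^{2n,n}(\Th(\mcE))$. Its normalization (reduction to the canonical class on trivial bundles, and compatibility with pullbacks) is inherited from the corresponding properties of $\phi_\mcE$.

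For (2)$\Rightarrow$(1), view a Thom class as a morphism $t_\mcE$ and define
\[
\phi_\mcE\colon \motive_\cT(\Th(\mcE)) \xrightarrow{\motive_\cT(\Delta)} \motive_\cT(\Th(\mcE)) \otimes \motive_\cT(X) \xrightarrow{t_\mcE \otimes \id} \Sigma^n_\T \tensorunit_\cT \otimes \motive_\cT(X) \cong \Sigma^n_\T \motive_\cT(X),
\]
where $\Delta\colon \Th(\mcE) \to \Th(\mcE) \wedge X_+$ is the diagonal-type map induced by $\mcE \to \mcE\times_S X$, $e\mapsto (e,p(e))$; this sends the complement of the zero section into $(\mcE\setminus 0)\times X$ and so factors through the smash product. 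The verification that $\phi_\mcE$ is an isomorphism is the technical core: argue Zariski-locally on $X$, using the Nisnevich descent that $\cT$ inherits from $\SHS$ together with the local triviality of $\mcE$ (and of its additional structure, in the case of $\Sp$-, $\SL$-, $\SL^c$-bundles), and reduce to the trivial bundle, where the normalization forces $\phi_\mcE$ to be the canonical suspension equivalence.

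The main obstacle is this local-to-global step in (2)$\Rightarrow$(1). It combines three ingredients: descent for $\cT$, local triviality in the appropriate topology for the additional structure on the bundle, and naturality of the Thom class under pullback. Only after gluing does one obtain the global isomorphism; a Mayer--Vietoris argument on a trivializing cover, bootstrapped by induction on the number of charts, should suffice, with the normalization axiom ensuring that the locally defined equivalences patch coherently.
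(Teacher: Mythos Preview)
Your proposal is correct and follows essentially the same route as the paper: both directions use the adjunction $\gamma^*_\cT\dashv\gamma_*^\cT$ (together with the fact that $\gamma_*^\cT$ commutes with $\Sigma_\T^n$) to identify Thom classes with morphisms $\motive_\cT(\Th(\mcE))\to\Sigma_\T^n\tensorunit_\cT$, the passage $(1)\Rightarrow(2)$ composes with the structure map $X\to S$, the passage $(2)\Rightarrow(1)$ uses the Thom diagonal and tensors with the class, and the verification that the resulting morphism is an isomorphism is a Mayer--Vietoris induction over a trivializing Zariski cover reducing to the normalization on trivial bundles. The paper isolates this last step as a separate lemma (your ``main obstacle''), but the content is the same; one small clarification is that no genuine gluing of local isomorphisms is needed, since $\phi_\mcE$ is defined globally from the outset and one only checks it is an isomorphism via the five lemma on the Mayer--Vietoris triangles.
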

\noindent As a consequence we obtain 
\begin{corollary}[Corollary~\ref{cor:cat_orient_sheaf}]
	Let $\cT$ be a triangulated motivic category over $S$ and suppose that the presheaf $(\Ht_{\cT} \Z)^{0,0}(-)$ is a Zariski sheaf on $\SmS$. Then $\cT$ admits normalized Thom isomorphisms for vector $SL^c$-bundles (i.e. for vector bundles admitting a square root of the determinant).
\end{corollary}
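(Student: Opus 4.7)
By Theorem~\ref{thm:thom_iso_equiv}, it suffices to show that the cohomology theory represented by $\Ht_\cT\Z$ admits normalized Thom classes $\thc(\mcE)\in(\Ht_\cT\Z)^{2n,n}(\Th(\mcE))$ for every vector $SL^c$-bundle $\mcE$ of rank $n$. The plan is to combine a universal construction in rank $1$ with a splitting-principle reduction, leveraging the Zariski sheaf hypothesis to perform the necessary descent.

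In rank $1$, a vector $SL^c$-bundle on $X$ is a line bundle $L$ together with a chosen square root $M$, and is classified by the map $X\to B\Gm\simeq\PP^\infty$ sending a point to $M$; the universal such bundle is $\struct(2)$ on $\PP^\infty$. I would construct a Thom class $\thc(\struct(2))\in(\Ht_\cT\Z)^{2,1}(\Th(\struct(2)))$ directly using the sheaf hypothesis: the cohomology of $\Th(\struct(2))$ in this bidegree can be computed from the standard affine cover of $\PP^\infty$, where $\struct(2)$ trivializes and yields a canonical local Thom class in each chart via the suspension isomorphism. The Zariski sheaf hypothesis on $(\Ht_\cT\Z)^{0,0}$ provides precisely the gluing principle needed to patch these local classes into a global section, and the normalization (restriction to fibers) is then verified locally.

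For higher rank, I would apply a splitting-principle argument along the flag bundle $\pi\colon\mathrm{Fl}(\mcE)\to X$: the pullback $\pi^*\mcE$ splits as $\bigoplus L_i$, with $\bigotimes L_i\cong\pi^*M^{\otimes 2}$ encoding the descended $SL^c$-datum. One assembles a Thom class on $\pi^*\mcE$ as the external product of rank-$1$ $SL^c$ Thom classes built from appropriately paired line bundles, and then descends to $X$ via the projective bundle formula, iterated through the flag tower. The principal obstacle is the rank-$1$ construction: one must verify that the Zariski sheaf hypothesis alone yields a normalized Thom class for $\struct(2)$ in the correct bidegree $(2,1)$, and that the gluing of local Thom classes is independent of the chosen trivializations of $M$, with the $SL^c$-constraint $L\cong M^{\otimes 2}$ ensuring that the transition functions act trivially on the resulting class.
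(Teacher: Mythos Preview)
The paper's own proof is a two-line citation: it invokes Theorem~\ref{thm:thom_iso_equiv} to reduce to the existence of Thom classes for $\Ht_\cT\Z$, and then appeals to \cite[Theorem~5.3]{An20} for that existence statement. Your reduction via Theorem~\ref{thm:thom_iso_equiv} matches, but from there you are attempting to reprove the cited external theorem, and your sketch has real gaps.

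The decisive problem is your higher-rank step. A splitting principle along the flag bundle $\pi\colon\mathrm{Fl}(\mcE)\to X$ requires $\pi^*$ to be split injective on cohomology, which is exactly what a projective bundle formula provides. But the projective bundle formula already presupposes a $\GL$-orientation (first Chern classes); you have no orientation yet, so invoking it is circular. Moreover, when $\pi^*\mcE$ splits as $\bigoplus L_i$, the individual $L_i$ carry no natural $\SLc$-structure, and there is no way to ``pair'' them so that each pair becomes a square: the constraint $\bigotimes L_i\cong \pi^*M^{\otimes 2}$ is a single global relation, not a pairwise one. So even on the flag variety you cannot assemble the Thom class as a product of rank-$1$ $\SLc$ Thom classes, and even if you could, you have no mechanism to descend it.

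Your rank-$1$ step also does not quite close. The Zariski sheaf hypothesis is on the presheaf $V\mapsto (\Ht_\cT\Z)^{0,0}(V)$, whereas the classes you want to glue live in $V\mapsto (\Ht_\cT\Z)^{2,1}(\Th(\struct(2)|_V))$. These two presheaves agree canonically on opens where $\struct(1)$ trivializes (because the transition functions are squares and $\langle a^2\rangle=\id$ on $\T$ in $\SHS$), but you cannot identify them on a general open without already having the Thom isomorphism you are trying to build; so the sheaf axiom for the first presheaf does not directly yield gluing for the second. The Mayer--Vietoris sequence gives \emph{existence} of a lift to $(\Ht_\cT\Z)^{2,1}(\Th(\struct(2)))$, but with an ambiguity coming from $(\Ht_\cT\Z)^{-1,0}$ on overlaps, which the sheaf hypothesis on degree $(0,0)$ does not control.

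The argument in \cite{An20} works in all ranks at once rather than via a splitting principle: the transition functions of an $\SLc$-bundle have determinant a square, so they act trivially on the local suspension classes, and the sheaf hypothesis is used to organise the resulting compatible system into a global class. If you want to reconstruct the proof, that is the direction to pursue; the flag-bundle route is a dead end here.
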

\noindent In particular, this recovers the existence of Thom isomorphisms in the triangulated category of MW-motives $\DMWk$ \cite[Theorem~6.2]{Y19}. Note also that homotopy purity isomorphisms yield that the existence in $\mc{T}$ of Thom isomorphisms for vector bundles with an additional structure is equivalent to the existence of Gysin triangles
\[
\motive_\cT (X-Z) \to \motive_\cT (X) \to \Sigma^n_{\cT}\motive_\cT (Z) \to \motive_\cT (X-Z)[1]
\]
for closed embeddings $Z\to X$ of smooth schemes with the normal bundle $N_{Z/X}$ admitting the corresponding additional structure (see Remark~\ref{rem:Gysin}).

We also show that the stable $\A^1$-derived category which is the universal linear triangulated motivic category does not admit Thom isomorphisms for vector bundles with an additional structure.
\begin{theorem}[Theorem~\ref{thm:DAS_Thom}]
	$\DAS$ does not admit normalized Thom isomorphisms for vector bundles and for vector bundles with an additional symplectic, $SL$ or $SL^c$-structure.
\end{theorem}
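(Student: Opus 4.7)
The plan is to apply Theorem~\ref{thm:thom_iso_equiv} to $\mc{T}=\DAS$, reducing the problem to showing that the spectrum $\ZS=\Ht_{\DAS}\Z\in\SHS$ admits no normalized Thom classes for any of the four classes—vector bundles, symplectic bundles, $\SL$-bundles, or $\SLc$-bundles. Among these, symplectic bundles constitute the most restrictive class: the symplectic form canonically trivialises the determinant (hence induces an $\SL$- and an $\SLc$-structure), and every structured bundle is in particular a vector bundle. Consequently, the existence of normalized Thom classes in any of the other three settings implies their existence for symplectic bundles, and it suffices to rule out normalized symplectic Thom classes on $\ZS$.

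The obstruction will come from the second motivic Hopf map $\nu\in\pi_{3,2}(\SSp)$. For any commutative ring spectrum $E\in\SHS$ admitting normalized symplectic Thom classes, the quaternionic projective bundle theorem computes $E^{*,*}(\HProj^\infty)$ as a power series ring on the first symplectic Pontryagin class; combined with the cofibre sequence
\[
S^{7,4}\xrightarrow{\nu} S^{4,2}\longrightarrow \Sigma^\infty_+\HProj^2,
\]
this forces the image of $\nu$ under the unit map $\pi_{*,*}(\SSp)\to\pi_{*,*}(E)$ to vanish. Applying this to $E=\ZS$, it therefore suffices to exhibit a nonzero element in $\pi_{3,2}(\ZS)$ lying in the image of $\nu$ under the Hurewicz-type map.

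The remaining step is an explicit computation, and this is the main obstacle: one must show that $\nu$ survives the linearisation $\gamma^*_{\DAS}\colon\SHS\to\DAS$ in bidegree $(3,2)$. The approach is to analyse the first homology sheaves of the motivic sphere spectrum—that is, $\gamma^*_{\DAS}\SSp\in\DAS$—in low degrees, starting from Morel's identification of $\pi_*^{\A^1}(\SSp)$ in terms of Milnor–Witt $K$-theory and tracking $\nu$ through the passage to $\A^1$-homology. The principal difficulty is that $\DAS$ is a strictly coarser linearisation of $\SHS$ than $\DMWk$ and lacks transfers, so the coefficient ring $\pi_{*,*}(\ZS)$ cannot simply be read off from Milnor–Witt motivic cohomology; a direct computation with $\A^1$-local sheaves of spectra is required in order to certify that the class of $\nu$ is not annihilated by the linearisation, and it is this certification that constitutes the technical heart of the argument.
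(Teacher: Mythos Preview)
Your overall architecture matches the paper exactly: reduce via Theorem~\ref{thm:thom_iso_equiv} to the ring spectrum $\ZS$, observe that the symplectic case implies all the others, show that normalized symplectic Thom classes force $\nu$ to die in $\pi_{1}(\ZS)_{-2}$, and finally show that $H(\nu)\neq 0$. Two points of divergence are worth flagging.

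\textbf{The step ``symplectic Thom classes $\Rightarrow$ $\nu$ vanishes''.} You invoke the quaternionic projective bundle theorem and the cofibre sequence $S^{7,4}\xrightarrow{\nu}S^{4,2}\to\HProj^2$. This is a legitimate route, but note that the projective bundle theorem as usually stated (e.g.\ in Panin--Walter) is proved for a symplectic \emph{orientation}, i.e.\ Thom classes that are multiplicative under direct sum, whereas Definition~\ref{def:coh_Thom} does not impose multiplicativity. You should either check that the additive splitting of $E^{*,*}(\HProj^2)$ needed to kill $\nu^*$ really follows from non-multiplicative Thom classes, or else strengthen the hypothesis. The paper sidesteps this entirely: Lemma~\ref{lem:nu_as_representation} identifies $\nu$ (after suspension) with the difference between the tautological $\SL_2$-action on $\T^{\wedge 2}$ and the projection, and then Lemma~\ref{lem:aut_gbundle}---which uses only properties (1)--(3) of Definition~\ref{def:coh_Thom}---says that an automorphism of a vector $G$-bundle acts trivially on $A$-cohomology of its Thom space. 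This yields Lemma~\ref{lem:nusymp} without any projective bundle machinery.

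\textbf{The nonvanishing of $H(\nu)$.} You correctly identify this as the technical heart, but your proposal stops at describing the difficulty. The paper's key device is Theorem~\ref{thm:a1homology}: one transports the topological cofibre sequence $\SSp_{\mathrm{top}}[1]\xrightarrow{\etatop}\SSp_{\mathrm{top}}\to\Cone(\etatop)\to\HZtop$ along the constant functor $\constf$ (Lemma~\ref{lem:const_functor}), uses connectivity (Lemma~\ref{lem:connectivity}) to see that $\constf(\Cone(\etatop))\to\ZS$ is an isomorphism on $\underline{\pi}_1$, and obtains the exact sequence
\[
\underline{\pi}_0(\SSp)_*\xrightarrow{\cup\eta^{\A^1}_{\mathrm{top}}}\underline{\pi}_1(\SSp)_*\xrightarrow{H}\underline{\pi}_1(\ZS)_*\to 0.
\]
This reduces the question to showing $\nu\notin\mathrm{K}^{\MW}_{-2}(k)\cdot\eta^{\A^1}_{\mathrm{top}}$ inside $\underline{\pi}_1(\SSp)_{-2}(k)$, which is then read off from the R\"ondigs--Spitzweck--{\O}stv{\ae}r computation (Corollary~\ref{cor:h1weightm2}, Lemma~\ref{lem:nuh1}). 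Without this exact sequence, a ``direct computation with $\A^1$-local sheaves'' as you suggest would be substantially harder; the reduction to $\eta^{\A^1}_{\mathrm{top}}$ is the idea you are missing.
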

\noindent Note that if one imposes \'etale descent and considers the category of \'etale motivic sheaves $\mathrm{D}_{\A^1,\et}(S)$ then one has normalized Thom isomorphisms for vector bundles by \cite[Remarque~11.3]{Ayo14} assuming that $S$ satisfies the technical condition of \cite[Hypoth\`ese~7.3]{Ayo14}.

In order to prove the above theorem we compute the first homology sheaves of the motivic sphere spectrum combining results of \cite{RSO19} and the following theorem.
\begin{theorem}[Theorem~\ref{thm:a1homology}]
	Let $S= \Spec k$ be the spectrum of a field. Then the following sequence is exact:
	\[
	\underline{\pi}_0(\SSp)_*\xrightarrow{\cup\eta^{\A^1}_{\mathrm{top}}} \underline{\pi}_1(\SSp)_* \xrightarrow{H} \underline{\pi}_1(\ZS)_*\to 0.
	\]
	Here $\ZS=\Ht_{\DAS}\Z$ and $H$ is induced by the unit morphism $\SSp\to \ZS$.
\end{theorem}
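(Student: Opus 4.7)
The plan is to combine the Röndigs--Spitzweck--Østvær computation of the first stable motivic stem from \cite{RSO19} with a direct analysis of the Hurewicz morphism $H$ induced by the unit $\SSp\to \ZS$. I would begin with the cofiber sequence
\[
\SSp\to \ZS\to C
\]
in $\SHk$ and its associated long exact sequence on Nisnevich homotopy sheaves. Recalling Morel's identification $\underline{\pi}_0(\SSp)_*\cong K^{MW}_*$ together with the analogous computation for $\underline{\pi}_0(\ZS)_*$ via $\A^1$-homology, surjectivity of $H$ on $\underline{\pi}_1$ will reduce to controlling the connecting map from $\underline{\pi}_1(C)_*$ into $\underline{\pi}_0(\SSp)_*$.

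The next step is to observe that $\etatop^{\A^1}$ lies in the kernel of $H$, so that cup product with $\etatop^{\A^1}$ gives a well-defined map $\underline{\pi}_0(\SSp)_* \to \ker H$. This is the motivic shadow of the classical fact that the topological Hopf element has trivial Hurewicz image in singular homology, and I would justify it by factoring $H$ through a $\Z$-linearization that annihilates $\etatop^{\A^1}$: either via Betti realization after base change to $\mathbb{C}$, or by passing through the derived category of constant sheaves via the $\constf$-functor. This yields the inclusion $\underline{\pi}_0(\SSp)_*\cdot \etatop^{\A^1}\subseteq \ker H$.

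For the reverse inclusion and surjectivity, I would consult the explicit description of $\underline{\pi}_1(\SSp)_*$ from \cite{RSO19}, which presents it as an extension built out of $K^{MW}_*$-module summands, certain Milnor K-theoretic mod-$n$ pieces, and an $\etatop^{\A^1}$-divisible summand corresponding to the image of $\cup\,\etatop^{\A^1}$. By matching these summands against a parallel description of $\underline{\pi}_1(\ZS)_*$ —either read off from the long exact sequence above or obtained directly via $\A^1$-homology and motivic cohomology— one shows that $H$ is injective outside the $\etatop^{\A^1}$-divisible piece and surjective onto the full target. The claimed exact sequence then drops out.

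I expect the main obstacle to be precisely this matching step. One must simultaneously invoke the delicate extension structure of $\underline{\pi}_1(\SSp)_*$ from \cite{RSO19} and identify the analogous structure on $\underline{\pi}_1(\ZS)_*$ in a compatible way. Whether one proceeds via the long exact sequence of $C$ —which requires computing $\underline{\pi}_1(C)_*$ along with the relevant connecting maps— or via a standalone computation of $\underline{\pi}_1(\ZS)_*$ through $\A^1$-homology, the work amounts to bookkeeping Hopf element actions carefully enough to isolate the $\etatop^{\A^1}$-multiples as the \emph{only} obstruction to $H$ being an isomorphism in this degree, which is the entire content of the theorem.
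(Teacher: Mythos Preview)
Your proposal has a genuine gap and misses the paper's central simplification. The plan requires an independent description of $\underline{\pi}_1(\ZS)_*$ against which to ``match'' the \cite{RSO19} answer for $\underline{\pi}_1(\SSp)_*$, but no such description is available: in fact the paper states this theorem precisely as the tool one uses to \emph{compute} $\underline{\pi}_1(\ZS)_*$ from $\underline{\pi}_1(\SSp)_*$ (see the Remark following the theorem and Corollary~\ref{cor:h1weightm2}). So appealing to a parallel computation of $\underline{\pi}_1(\ZS)_*$ is circular, and your alternative of extracting it from the long exact sequence of the cofiber $C$ just reintroduces the unknown you are trying to determine. A second issue is that \cite{RSO19} only identifies $\underline{\pi}_1(\SSp)_*$ after inverting the exponential characteristic, so even granting the rest of the argument you would not obtain the integral statement.

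The paper's proof avoids \cite{RSO19} entirely. The key point, which you do not use, is Lemma~\ref{lem:const_functor}: the spectrum $\ZS$ and the unit $\SSp\to\ZS$ are obtained by applying the constant functor $\constf$ to the classical unit $\SSp_{\mathrm{top}}\to\HZtop$ in $\SH$. One then works in $\SH$: since $\pi_1(\SSp_{\mathrm{top}})$ is generated by $\etatop$, the induced map $\Cone(\etatop)\to\HZtop$ has $2$-connected fiber. Applying $\constf$ and using that $\constf$ preserves connectivity (Lemma~\ref{lem:connectivity}) shows $\underline{\pi}_1(\constf(\Cone(\etatop)))_*\xrightarrow{\simeq}\underline{\pi}_1(\ZS)_*$, and the exact sequence drops out of the long exact sequence for the row $\SSp[1]\xrightarrow{\eta^{\A^1}_{\mathrm{top}}}\SSp\to\constf(\Cone(\etatop))$. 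The only nontrivial topological input is $\pi_1(\SSp_{\mathrm{top}})=\Z/2\cdot\etatop$; no motivic stem computations are needed.
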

\noindent 
It follows that the element in the coefficient ring of $\ZS$ corresponding to the second Hopf map $\nu\in \underline{\pi}_{1}(\SSp)_{-2}(S)$ is nontrivial which turns out to be an obstruction to the existence of Thom isomorphisms (Lemma~\ref{lem:nusymp}).

The paper is organized as follows. In Section~\ref{section:triangulated} we introduce the formalism related to triangulated motivic categories. In the next section we recall the notions of vector bundles with additional structure, Thom classes, and Thom isomorphisms, and then we show that the existence of Thom classes is equivalent to the existence of categorical Thom  isomorphisms. In Section~\ref{section:homology} we show how to compute the first homology sheaves of the motivic sphere spectrum out of the zeroth and first homotopy sheaves and show that the class $H(\nu)$ in $\A^1$-homology corresponding to the second Hopf map $\nu$ is nontrivial. In the last section we show that nontriviality of $H(\nu)$ gives an obstruction to the existence of Thom isomorphisms in the stable $\A^1$-derived category $\DAS$.

{\it Acknowledgement.} I would like to thank Vladimir Sosnilo for enlightening discussions which eventually led to Theorem~\ref{thm:a1homology} and Oliver R\"ondigs for answering many questions related to the Hopf map $\nu$ and the first homotopy module of the motivic sphere spectrum.

Throughout the paper we employ the following assumptions and notations.

\begin{tabular}{l|l}
	$S$ & a quasi-compact separated scheme \\
	$\SmS$ & the category of smooth schemes over $S$\\
	$\SHS$ & the motivic stable homotopy category over $S$ \cite{MV99,V98}\\
	$\DAS$ & the stable $\A^1$-derived category over $S$ \cite[Chapter~5]{CD19}\\	
	$\SSp$ & the motivic sphere spectrum in $\SHS$\\	
	$\Th(E)$ & the Thom space of a vector bundle $E$\\
	$\Gm$ & $\A^1-0$ considered as a scheme over $S$\\
	$\triv_X^{\oplus n}$ & the trivialized rank $n$ vector bundle over $X\in\SmS$
\end{tabular}

\section{Preliminaries on triangulated motivic categories} \label{section:triangulated}

\begin{definition}
	A \textit{triangulated motivic category over $S$} is a tensor triangulated category $\cT$ equipped with an adjunction of triangulated categories 
	\[
	\gamma^*_\cT\colon \SHS \leftrightarrows \cT\colon \gamma_*^\cT
	\]
	with $\gamma_\cT^*$ being strict monoidal.	We put
	\[
	\Ht_\cT\Z = \gamma_*^\cT \gamma^*_\cT \SSp\in \SHS
	\]
	and refer to it as \textit{the spectrum representing $\cT$-motivic cohomology}.
\end{definition}

\begin{example} \label{def:a1derived}
	The \textit{stable $\A^1$-derived category $\DAS$} is obtained from the category of complexes of Nisnevich sheaves of abelian groups on $\SmS$ inverting $\A^1$-weak equivalences and stabilizing with respect to the $\Gm$-suspension functor, see \cite[Chapter~5]{CD19} for the details. This category is tensor triangulated and admits a canonical adjunction of triangulated categories
		\[
		\homol_{\A^1}\colon\SHS \leftrightarrows \DAS \colon \emclane_{\A^1}
		\]
		with $\homol_{\A^1}$ being monoidal. The adjunction is the $\A^1$-derived $\Gm$-stable version of the classical adjunction 
		\[
		\homol_{\bullet}\colon\SH \leftrightarrows \mathrm{D}(\mathcal{A}b) \colon \emclane
		\]
		between the stable homotopy category and the derived category of abelian groups with $\homol_{\bullet}$ being the singular complex functor and $\emclane$ being the Eilenberg-Maclane functor.

		We put 
		\[
		\HAZ=\Ht_{\DAS} \Z =\emclane_{\A^1} \homol_{\A^1}\SSp\in \SHS
		\]
		and refer to it as \textit{the spectrum representing $\A^1$-cohomology}. See Lemma~\ref{lem:const_functor} for the identification of $\HAZ$ via the classical Eilenberg-Maclane spectrum $\Ht_{\mathrm{top}}\Z\in\SH$.
\end{example}

\begin{example} \label{ex:cor_cat}
	Let $S=\Spec k$ be the spectrum of a field. Many interesting triangulated motivic categories arise in the following way. Let $\mathrm{Cor}^\cT_k$ be a category of correspondences, i.e. a category with objects being those of $\Sm_k$ and the morphisms $\mathrm{Cor}^\cT_k(Y,X)$ given by formal linear combinations of some morphisms $Z\to Y\times X$ with possibly some additional data attached (e.g. a trivialization of $\omega_{Z/Y}$). The triangulated motivic category attached to $\mathrm{Cor}^\cT_k$ is the category of complexes of Nisnevich sheaves of abelian groups on $\mathrm{Cor}^\cT_k$ with inverted $\A^1$-weak equivalences and inverted $\Gm$-suspension functor. The adjunction is given by the compositions
	\[
	\gamma^*_\cT\colon \SHS \xleftrightarrows[\homol_{\A^1}]{\emclane_{\A^1}} \DAS \xleftrightarrows[\hphantom{x}\widetilde{\gamma}^*_\cT\hphantom{x}]{\widetilde{\gamma}_*^\cT} \cT \colon \gamma_*^\cT
	\]
	with the adjunction $\widetilde{\gamma}^*_\cT \dashv \widetilde{\gamma}_*^\cT$ induced by the graph functor $\gamma\colon \Sm_k \to \mathrm{Cor^\cT_k}$. The most interesting particular examples are the following ones:
	\begin{enumerate}
		\item Suppose that $k$ is perfect. Then we have the triangulated category of Voevodsky motives $\DMk$ based on the category of Voevodsky correspondences $\mathrm{Cor}_k$, see e.g. \cite{MVW06}. Here $\Ht_{\DMk}\Z=\HM\Z$ is the spectrum representing motivic cohomology.
		\item Suppose that $k$ is infinite, perfect and $\operatorname{char} k\neq 2$. Then we have the triangulated category of MW-motives $\DMWk$ based on the category of MW-correspondences $\widetilde{\mathrm{Cor}}_k$ \cite{BCDFO20}. Here $\Ht_{\DMWk}\Z=\HMW\Z$ is the spectrum representing MW-motivic cohomology.
		\item $\DM_A(k)$ based on the cohomological correspondence categories $\mathrm{Cor}^A_k$ introduced in \cite{DK18}. This example covers both $\DMk$ and $\DMWk$ discussed above (put $A=\HM \Z$ and $A=\HMW \Z$ respectively) but at the current moment this construction lacks the identification of $\Ht_{\DM_A(k)} \Z$ in terms of $A$. 
	\end{enumerate}
	Note that the basic constructions of the above categories do not need the base field to be infinite or perfect, but the comparison results identifying the cohomology theories represented by the spectra $\HM \Z$ and $\HMW \Z$ use in a crucial way the theory of presheaves with transfers which is developed for $\mathrm{Cor}_k$ over a perfect field and for $\widetilde{\mathrm{Cor}}_k $ over an infinite perfect field of $\operatorname{char}\neq 2$.
\end{example}

\begin{example} \label{ex:module_cat}
	Let $A\in \SHS$ be a highly structured commutative ring spectrum. Then we have the homotopy category of modules $\mathrm{Mod}_A$ which is tensor triangulated and comes with the adjunction
	\[
	\gamma^*_\cT\colon \SHS \leftrightarrows \mathrm{Mod}_A\colon \gamma_*^\cT.
	\]
	Here $\Ht_{\mathrm{Mod}_A}\Z=A$.
\end{example}

\begin{remark}
	Examples~\ref{ex:cor_cat} and~\ref{ex:module_cat} are closely related, see \cite[Theorem~1]{RO08}, \cite{EK17} and \cite[Theorem~5.3]{G19}.
\end{remark}

\begin{definition} \label{def:motive_Thom}
	Let $\cT$ be a triangulated motivic category over $S$ and consider the composition of functors
	\[
	\motive_\cT\colon \SmS \xrightarrow{\Sigma^\infty_\T(-)_+} \SHS \xrightarrow{\gamma_\cT^*} \cT.
	\]
	For $X\in \SmS$ we refer to 
	\[
	\motive_\cT(X)=\gamma_\cT^*(\Sigma^\infty_\T X_+) \in \cT
	\]
	as \textit{the $\cT$-motive of $X$}. For $X\in\SmS$ and an open embedding $j\colon U\to X$ we put 
	\[
	\motive_\cT(X/U)=\Cone\left( \motive_{\cT}(U) \xrightarrow{\motive_\cT(j)} \motive_{\cT}(X) \right).
	\]
	In particular we have
	\begin{itemize}
		\item the $\cT$-motive of the Thom space of a vector bundle $E$ over $X\in\SmS$ given by 
		\[
		\motive_\cT(\Th(E))=\Cone\left( \motive_{\cT}(E-z(X)) \xrightarrow{\motive_\cT(j)} \motive_{\cT}(E) \right)
		\]
		where $z\colon X\to E$ is the zero section and $j\colon E-z(X)\to E$ is the open embedding,		
		\item the $\cT$-motive of $\T$ given by 
		\[
		\motive_\cT(\T)=\Cone\left( \motive_{\cT}(\Gm) \xrightarrow{\motive_\cT(j)} \motive_{\cT}(\A^1) \right)
		\]
		where $j\colon \Gm\to \A^1$ is the open embedding.
	\end{itemize}
	For $\mc{N}\in\cT$ and $n\in \Z_{\ge 0}$ we put
	\[
	\Sigma_{\T}^n \mc{N} = \mc{N}\otimes \motive_{\cT}(\T)^{\otimes n}
	\]
	and refer to it as \textit{the $n$-fold $\T$-suspension of $\mc{N}$}. Lemma~\ref{lem:tmc_basic} below shows that the functor $\Sigma_\T$ is invertible allowing one to consider $\Sigma^n_\T$ for $n$ negative as well.

\end{definition}

\begin{lemma} \label{lem:tmc_basic}
	Let $\cT$ be a triangulated motivic category over $S$. Then
	\begin{enumerate}
		\item $\motive_\cT(\Th(E))=\gamma^*_\cT (\Sigma^\infty_\T \Th(E))$ for a vector bundle $E$ over $X\in\SmS$,
		\item $\motive_{\cT}(\T)= \gamma^*_\cT (\Sigma^\infty_\T \T)$, 
		\item $\motive_{\cT}(\T)$ is $\otimes$-invertible,
		\item $\Sigma^n_\T \gamma^*_\cT A = \gamma^*_\cT \Sigma^n_\T A$ for $A\in\SHS$ and $n\in \Z$.
		\item $\gamma_*^{\cT}\Sigma^n_\T \mc{N}=\Sigma^n_\T\gamma_*^{\cT}\mc{N}$ for $\mc{N}\in\cT$ and $n\in\Z$.
	\end{enumerate}
\end{lemma}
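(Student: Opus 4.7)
The plan is to leverage the two defining properties of $\gamma^*_\cT$: it is a triangulated functor (hence preserves cones of morphisms) and strict monoidal (hence $\gamma^*_\cT(A\otimes B)=\gamma^*_\cT A\otimes \gamma^*_\cT B$ and $\gamma^*_\cT\tensorunit=\tensorunit$). Parts (1)--(4) will follow formally from these two facts, while (5) will require a projection-formula argument.

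For (1) and (2), I would observe that both $\motive_\cT(\Th(E))$ and $\motive_\cT(\T)$ are by definition the cone in $\cT$ of $\motive_\cT$ applied to an open embedding in $\SmS$, while $\Sigma^\infty_\T \Th(E)$ and $\Sigma^\infty_\T \T$ are by construction the corresponding cones in $\SHS$. Since $\motive_\cT=\gamma^*_\cT\Sigma^\infty_\T(-)_+$ and $\gamma^*_\cT$ is triangulated, applying $\gamma^*_\cT$ to the defining cofiber sequence in $\SHS$ produces the defining cofiber sequence in $\cT$, giving both identifications at once.

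For (3), I would use the fact that $\Sigma^\infty_\T \T$ is $\otimes$-invertible in $\SHS$ by construction of the motivic stable homotopy category, together with the elementary observation that any strict monoidal functor sends a $\otimes$-invertible object to a $\otimes$-invertible object (the image of an inverse serves as an inverse of the image). Part (4) for $n\ge 0$ is then a direct unpacking,
\[
\Sigma^n_\T\gamma^*_\cT A = \gamma^*_\cT A \otimes (\gamma^*_\cT\Sigma^\infty_\T\T)^{\otimes n} = \gamma^*_\cT\bigl(A\otimes (\Sigma^\infty_\T\T)^{\otimes n}\bigr)=\gamma^*_\cT \Sigma^n_\T A,
\]
and the same computation handles $n<0$ using $(\gamma^*_\cT L)^{-1}\cong \gamma^*_\cT(L^{-1})$ for invertible $L$.

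The main obstacle will be (5), since $\gamma_*^\cT$ is not assumed monoidal and so its interaction with $\otimes$ is not given for free. My strategy is to establish a projection formula for invertible objects. Setting $L=(\Sigma^\infty_\T \T)^{\otimes n}\in\SHS$, so that $\gamma^*_\cT L=\motive_\cT(\T)^{\otimes n}$ by (4), I would verify that for every $A\in\SHS$ and every $\mc{N}\in\cT$ there is a chain of natural isomorphisms
\[
\Hom_{\SHS}\bigl(A,\gamma_*^\cT(\mc{N}\otimes \gamma^*_\cT L)\bigr) \cong \Hom_\cT\bigl(\gamma^*_\cT(A\otimes L^{-1}),\mc{N}\bigr) \cong \Hom_{\SHS}\bigl(A,\gamma_*^\cT\mc{N}\otimes L\bigr)
\]
obtained by successively invoking the $\gamma^*_\cT\dashv\gamma_*^\cT$ adjunction, strict monoidality, and invertibility of $L$ and $\gamma^*_\cT L$. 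Yoneda then gives $\gamma_*^\cT \Sigma^n_\T\mc{N}\cong \Sigma^n_\T\gamma_*^\cT\mc{N}$. This projection-formula step is the only mildly nontrivial point; everything else in the lemma is a direct manipulation of the definitions.
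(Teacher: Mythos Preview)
Your proposal is correct and matches the paper's proof essentially step for step: items (1)--(4) are handled identically, and your ``projection formula'' argument for (5) is exactly the paper's chain of adjunction, invertibility of $\Sigma_\T$, and item (4), concluded by Yoneda. The only difference is packaging---you name the principle, the paper just writes out the five-step chain of Hom-sets.
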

\begin{proof}
	Items (1) and (2) are straightforward.
	
	Item (3) follows from item (2) since $\gamma_\cT^*$ is monoidal and $\Sigma_{\T}^\infty \T$ is $\wedge$-invertible.
	
	Item (4) follows from item (2) in the following way:
	\[
	\Sigma^n_\T \gamma^*_\cT (A) = \gamma^*_\cT (A) \otimes \motive_{\cT}(\T)^{\otimes n} =  \gamma^*_\cT (A) \otimes  \gamma^*_\cT (\Sigma^\infty_\T \T)^{\otimes n} = \gamma^*_\cT (\Sigma_{\T}^n A).
	\]
	
	In order to prove item (5) take some $A\in\SHS$. Then applying adjunction, item (4) and the fact that $\Sigma_{\T}^n$ is invertible we have the following:
	\begin{multline*}
	\Hom_{\SHS}(A,\gamma_*^{\cT}\Sigma^n_\T \mc{N})=\Hom_{\cT}(\gamma^*_{\cT}A,\Sigma^n_\T \mc{N})
	=\Hom_{\cT}(\Sigma^{-n}_\T \gamma^*_{\cT}A,\mc{N})
	=\\
	=\Hom_{\cT}(\gamma^*_{\cT}\Sigma^{-n}_\T A,\mc{N})
	=\Hom_{\SHS}(\Sigma^{-n}_\T A,\gamma_*^{\cT}\mc{N})
	=\Hom_{\SHS}(A,\Sigma^{n}_\T\gamma_*^{\cT}\mc{N}).
	\end{multline*}
	This holds for every $A\in\SHS$ whence the claim.
\end{proof}

\begin{lemma} \label{lem:tmc_triangles}
	Let $\cT$ be a triangulated motivic category over $S$. Then
	\begin{enumerate}
		\item for an open cover $U\cup V=X\in\SmS$ and a vector bundle $E$ over $X$ there are distinguished triangles
		\begin{gather*}
		\motive_\cT(U\cap V) \xrightarrow{f} \motive_\cT(U) \oplus \motive_\cT(V)\xrightarrow{g} \motive_\cT(X) \to \motive_\cT(U\cap V) [1],\\
		\motive_\cT(\Th(E|_{U\cap V})) \xrightarrow{\widetilde{f}} \motive_\cT(\Th(E|_{U})) \oplus \motive_\cT(\Th(E|_{V}))\xrightarrow{\widetilde{g}} \motive_\cT(\Th(E)) \to \motive_\cT(\Th(E|_{U\cap V})) [1]
		\end{gather*}
		with the maps $f, g$ and $\widetilde{f}, \widetilde{g}$ induced by the respective open embeddings,
		\item for a closed embedding $Z\to X$ with $Z,X\in\SmS$ there is a distinguished triangle
		\[
		\motive_\cT(X-Z) \xrightarrow{\motive_\cT(j)} \motive_\cT(X)\to \motive_\cT(\Th(N_{Z/X})) \to \motive_\cT(X-Z) [1]
		\]
		with $j\colon X-Z\to X$ being the open embedding and $N_{Z/X}$ being the normal bundle.
	\end{enumerate}
\end{lemma}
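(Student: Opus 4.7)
The plan is to reduce both statements to analogous distinguished triangles in $\SHS$ and transport them to $\cT$ via the triangulated monoidal functor $\gamma_\cT^*$. Concretely, since $\motive_\cT = \gamma_\cT^* \circ \Sigma^\infty_\T(-)_+$, and since $\gamma_\cT^*$ preserves distinguished triangles (being the left adjoint in an adjunction of triangulated categories), it suffices to produce the claimed cofiber sequences already in $\SHS$ and then apply $\gamma_\cT^*$, using Lemma~\ref{lem:tmc_basic}(1),(2) to identify the images of $\Sigma^\infty_\T \Th(E)$ and $\Sigma^\infty_\T \T$ with $\motive_\cT(\Th(E))$ and $\motive_\cT(\T)$.

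For item~(1) I would first recall the Mayer--Vietoris cofiber sequence in the unstable pointed motivic homotopy category. Given an open cover $U\cup V = X$, the elementary Nisnevich (in fact Zariski) square with corners $U\cap V, U, V, X$ is a homotopy pushout in $\Hp(S)$, so after $\Sigma^\infty_\T(-)_+$ we obtain a distinguished triangle
\[
\Sigma^\infty_\T (U\cap V)_+ \xrightarrow{f'} \Sigma^\infty_\T U_+ \oplus \Sigma^\infty_\T V_+ \xrightarrow{g'} \Sigma^\infty_\T X_+ \to \Sigma^\infty_\T (U\cap V)_+[1]
\]
in $\SHS$ with the two maps induced by the respective open embeddings. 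Applying $\gamma_\cT^*$ yields the first triangle of (1). For the Thom space version I would observe that the pullback of the cover $\{U,V\}$ along $E\to X$ produces an open cover $\{E|_U, E|_V\}$ of $E$, and moreover restricts to an open cover of $E - z(X)$; this gives two compatible Mayer--Vietoris cofiber sequences in $\SHS$, and taking cofibers levelwise (i.e. using that $\Th(E|_W) \simeq \mathrm{cofib}(\Sigma^\infty_\T (E|_W - z(W))_+ \to \Sigma^\infty_\T (E|_W)_+)$ for each open $W\subseteq X$) gives the desired Mayer--Vietoris triangle for Thom spaces.

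For item~(2) the key input is the Morel--Voevodsky homotopy purity theorem, which provides a canonical $\A^1$-equivalence $X/(X-Z) \simeq \Th(N_{Z/X})$ in $\Hp(S)$, see \cite{MV99}. Thus in $\SHS$ the natural cofiber sequence
\[
\Sigma^\infty_\T (X-Z)_+ \xrightarrow{\Sigma^\infty_\T j_+} \Sigma^\infty_\T X_+ \to \Sigma^\infty_\T \bigl(X/(X-Z)\bigr) \to \Sigma^\infty_\T (X-Z)_+[1]
\]
is identified with a distinguished triangle in which the third term is $\Sigma^\infty_\T \Th(N_{Z/X})$. Applying $\gamma_\cT^*$ and invoking Lemma~\ref{lem:tmc_basic}(1) to identify $\gamma_\cT^* \Sigma^\infty_\T \Th(N_{Z/X}) \cong \motive_\cT(\Th(N_{Z/X}))$ produces the required Gysin triangle.

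The only substantive ingredient here is homotopy purity; the Mayer--Vietoris step is essentially a formal consequence of the Nisnevich descent built into $\SHS$. I do not expect a genuine obstacle, but some care is needed to check that the connecting open embeddings assemble into a map of cofiber sequences in the Thom-space version of (1), so that the induced triangle on cofibers has the stated form.
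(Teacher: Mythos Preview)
Your proposal is correct and follows precisely the approach of the paper, which records only the one-line proof ``Follows from the corresponding sequences in $\SHS$ taking $\gamma^*_\cT$.'' You have simply unpacked this: the Mayer--Vietoris and homotopy purity triangles exist in $\SHS$, and the triangulated functor $\gamma_\cT^*$ transports them to $\cT$, with Lemma~\ref{lem:tmc_basic}(1) supplying the identification of Thom-space motives.
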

\begin{proof}
	Follows from the corresponding sequences in $\SHS$ taking $\gamma^*_\cT$.
\end{proof}

\section{Cohomological and categorical Thom isomorphisms}

\begin{definition}[{see \cite[Section~2]{An20}}]
	A \textit{rank $n$ vector $G$-bundle $\mathcal{E}$ over $X\in \SmS$} with $G=\GL,\,\Sp,\,\SL$, or $\SLc$ is one of the following:
	\begin{itemize}
		\item $G=\GL\colon$ $\mathcal{E}=E$ a rank $n$ vector bundle over $X$,
		\item $G=\Sp\colon$ $\mathcal{E}=(E,\phi)$ where $E$ is a rank $n$ vector bundle over $X$ and $\phi$ is a symplectic form on $E$,
		\item $G=\SL\colon$ $\mathcal{E}=(E,\lambda)$ where $E$ is a rank $n$ vector bundle over $X$ and $\lambda\colon \det E\xrightarrow{\simeq} \triv_X$ is an isomorphism of line bundles,
		\item $G=\SLc\colon$ $\mathcal{E}=(E,L,\lambda)$ where $E$ is a rank $n$ vector bundle over $X$, $L$ is a line bundle over $X$ and $\lambda\colon \det E\xrightarrow{\simeq} L^{\otimes 2}$ is an isomorphism of line bundles.
	\end{itemize}
	A trivialized rank $n$ vector bundle over $X$ carries a canonical structure of a vector $G$-bundle (\cite[Section~2]{An20}). The corresponding vector $G$-bundle is denoted $\triv_X^{G,n}$ and referred to as the \textit{trivialized rank $n$ vector $G$-bundle over $X$}.
	
	In the above notation we put 
	\[
	\Th(\mcE)=\Th(E)=E/(E-z(X))
	\]
	with $z\colon X\to E$ being the zero section and refer to it as \textit{the Thom space of $\mcE$}.
\end{definition}

\begin{definition}[{cf. \cite[Definition~3.3]{An20}}]\label{def:coh_Thom}
	Let $G=\GL,\,\Sp,\,\SL$, or $\SLc$. We say that the cohomology theory represented by a commutative ring spectrum $A\in\SHS$ \textit{admits normalized Thom classes for vector $G$-bundles} if there is a rule which assigns to each rank $n$ vector $G$-bundle $\mathcal{E}$ over $X\in\SmS$ an element 
	\[
	\thc(\mathcal{E}) \in A^{2n,n}(\Th(\mathcal{E}))=\Hom_{\SHS}(\Sigma^\infty_\T\Th(\mathcal{E}),\Sigma^{2n,n} A)
	\]
	with the following properties:
	\begin{enumerate}
		\item For the trivialized rank $n$ vector $G$-bundle $\triv_X^{G,n}$ one has
		\[
		\thc(\triv_X^{G,n})=\Sigma^n_\T 1 \in A^{2n,n}(\Sigma^{n}_\T X_+)=A^{2n,n}(\Th(\triv_X^{G,n})).
		\]
		\item For an isomorphism $\theta\colon \mcE\xrightarrow{\simeq} \mcE'$ of vector $G$-bundles over $X\in\SmS$ one has 
		\[
		\thc(\mcE)=\widetilde{\theta}^A\thc(\mcE')
		\]
		where $\widetilde{\theta}\colon \Th(\mcE) \to \Th(\mcE')$ is the morphism induced by $\theta$.
		\item For a morphism $f\colon Y\to X$ in $\SmS$ and a vector $G$-bundle $\mcE$ over $X$ one has
		\[
		\widetilde{f}^A(\thc(\mcE))=\thc(f^*\mcE)
		\]
		where $\widetilde{f}\colon \Th(f^*\mcE)\to \Th(\mcE)$ is the morphism induced by the morphism of total spaces $f^*\mcE\to\mcE$.
	\end{enumerate}
	We refer to $\thc(\mcE)$ as \textit{Thom classes}. 
\end{definition}

\begin{remark} \label{rem:thom_iso}
	It is easy to see from the Mayer-Vietoris long exact sequence that for a cohomology theory with normalized Thom classes the homomorphism
	\[
	-\cup \thc(\mcE)\colon A^{*,*}(X)\to A^{*+2n,*+n}(\Th(\mcE))
	\]
	is an isomorphism, see e.g. \cite[Lemma~3.8]{An20}.
\end{remark}

\begin{remark}
	A commutative ring spectrum $A\in \SHS$ admits a \textit{normalized $G$-orientation} if in addition to the properties from Definition~\ref{def:coh_Thom} Thom classes are multiplicative with respect to direct sum of vector $G$-bundles (see \cite[Definition~3.3]{An20} for the details). This notion is closely related to the existence of a homomorphism of commutative ring spectra $MG\to A$, see \cite[Theorem~1.1]{PPR08}, \cite[Theorems~1.1,~5.9]{PW10}, \cite[Section~16]{BH17} and~\cite[Section~4.3]{BW20}.
\end{remark}

\begin{lemma} \label{lem:aut_gbundle}
	Let $G=GL,\,Sp,\,SL$, or $SL^c$ and $A\in\SHS$ be a commutative ring spectrum representing a cohomology theory that admits normalized Thom classes for vector $G$-bundles. Let $\theta\colon \mcE\to \mcE$ be an automorphism of a vector $G$-bundle $\mcE$ over $X\in\SmS$ and let $\widetilde{\theta}\colon \Th(\mcE)\to \Th(\mcE)$ be the corresponding automorphism of the Thom space. Then
	\[
	\widetilde{\theta}^A\colon A^{*,*}(\Th(\mcE)) \to A^{*,*}(\Th(\mcE))
	\]
	is the identity morphism.
\end{lemma}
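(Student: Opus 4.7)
The plan is to combine the invariance of the Thom class under the automorphism $\theta$ (property~(2) of Definition~\ref{def:coh_Thom}) with the Thom isomorphism (Remark~\ref{rem:thom_iso}) to conclude that every element of $A^{*,*}(\Th(\mcE))$ is fixed by $\widetilde{\theta}^A$.

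First I would observe that applying property~(2) of Definition~\ref{def:coh_Thom} to the automorphism $\theta\colon \mcE\to\mcE$ (viewed as an isomorphism with target $\mcE'=\mcE$) gives immediately that
\[
\widetilde{\theta}^A \thc(\mcE) = \thc(\mcE).
\]
So the Thom class itself is $\widetilde{\theta}^A$-invariant. The next step is to upgrade this to invariance of every cohomology class of $\Th(\mcE)$. By Remark~\ref{rem:thom_iso}, an arbitrary element of $A^{*,*}(\Th(\mcE))$ has the form $x\cup \thc(\mcE)$ for a unique $x\in A^{*,*}(X)$, so it suffices to show that $\widetilde{\theta}^A$ is $A^{*,*}(X)$-linear with respect to this cup-product pairing, which then gives
\[
\widetilde{\theta}^A(x\cup \thc(\mcE)) = x\cup \widetilde{\theta}^A\thc(\mcE) = x\cup \thc(\mcE).
\]

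To verify this linearity, I would use that $\widetilde{\theta}$ is a morphism of Thom spaces covering the identity of $X$: because $\theta$ is a bundle automorphism, $\theta$ commutes with the projection $E\to X$ and with the zero section, so after passage to $\Th(\mcE)=E/(E-z(X))$ the diagonal map $\Delta\colon \Th(\mcE)\to X_+\wedge \Th(\mcE)$ (induced by the bundle diagonal $E\to X\times E$) satisfies $\Delta\circ \widetilde{\theta}= (\id_{X_+}\wedge\widetilde{\theta})\circ\Delta$. Since the cup-product pairing is defined as $x\cup y = (x\wedge y)\circ\Delta$, this commutation rule is exactly the desired $A^{*,*}(X)$-linearity of $\widetilde{\theta}^A$.

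I do not expect any serious obstacle: the only substantive input is Definition~\ref{def:coh_Thom}(2), and the rest is a formal consequence of the Thom isomorphism together with the fact that $\widetilde{\theta}$ is a map over $X$. The mildly delicate point is making the $A^{*,*}(X)$-module structure on $A^{*,*}(\Th(\mcE))$ precise, but this is standard and needs only that bundle automorphisms cover $\id_X$.
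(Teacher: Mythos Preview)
Your proposal is correct and follows essentially the same argument as the paper: use Definition~\ref{def:coh_Thom}(2) to get $\widetilde{\theta}^A\thc(\mcE)=\thc(\mcE)$, invoke the Thom isomorphism to see $A^{*,*}(\Th(\mcE))$ is free of rank one over $A^{*,*}(X)$ on $\thc(\mcE)$, and note that $\widetilde{\theta}^A$ is $A^{*,*}(X)$-linear since $\widetilde{\theta}$ covers $\id_X$. The paper states the $A^{*,*}(X)$-linearity without further comment, whereas you spell out the diagonal argument, but the proofs are otherwise identical.
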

\begin{proof}
	Remark~\ref{rem:thom_iso} yields that $A^{*,*}(\Th(\mcE))$ is a rank $1$ free $A^{*,*}(X)$-module generated by $\thc(\mcE)$. Property~$(2)$ of Definition~\ref{def:coh_Thom} yields that $\widetilde{\theta}^A(\thc(\mcE))=\thc(\mcE)$. The morphism $\widetilde{\theta}^A$ is a homomorphism of $A^{*,*}(X)$-modules whence the claim.
\end{proof}

\begin{definition} \label{def:thom_cat}
	Let $G=\GL,\,\Sp,\,\SL$, or $\SLc$. We say that a triangulated motivic category $\cT$ over $S$ \textit{admits normalized Thom isomorphisms for vector $G$-bundles} if there is a rule which assigns to each rank $n$ vector $G$-bundle $\mcE$ over $X\in \SmS$ an isomorphism
	\[
	\thc^\cT_\mcE\colon \motive_{\cT}(\Th(\mcE)) \xrightarrow{\simeq} \Sigma^n_\T\motive_{\cT}(X)
	\]
	with the following properties:
	\begin{enumerate}
		\item For the trivialized rank $n$ vector $G$-bundle $\triv_X^{G,n}$ the isomorphism
		\[
		\thc^\cT_{\triv_X^{G,n}}
		\colon \motive_{\cT}(\Th(\triv_X^{G,n}))\xrightarrow{\simeq}\Sigma^n_\T\motive_{\cT}(X)
		\]
		is the canonical one induced by the identification $\Th(\triv_X^{G,n})=\Th(\triv_X^{\oplus n})=\Sigma^n_\T X_+$.
		\item For an isomorphism $\theta\colon \mcE\xrightarrow{\simeq} \mcE'$ of vector $G$-bundles over $X\in\SmS$ one has 
		\[
		\thc^\cT_\mcE=\thc^\cT_{\mcE'} \circ \motive_\cT(\theta)
		\]
		where $\motive_\cT(\theta)\colon \motive_\cT(\Th(\mcE))\to \motive_\cT(\Th(\mcE'))$ is induced by the morphism of total spaces $\mcE\to \mcE'$ given by $\theta$.
		\item For a morphism $f\colon Y\to X$ in $\SmS$ and a vector $G$-bundle $\mcE$ over $X$ one has
		\[
		\motive_{\cT}(f)\circ \thc^\cT_{f^*\mcE} = \thc^\cT_\mcE \circ \motive_\cT(\widetilde{f})
		\]
		where $\motive_\cT(\widetilde{f})\colon \motive_\cT(\Th(f^*\mcE))\to \motive_\cT(\Th(\mcE))$ is induced by the morphism of total spaces $\widetilde{f}\colon f^*\mcE\to\mcE$.		
	\end{enumerate}
\end{definition}

\begin{lemma} \label{lem:thom_iso}
	Let $G=GL,\,Sp,\,SL$, or $SL^c$ and let $\cT$ be a triangulated motivic category over $S$. Suppose that there is a rule which assigns to each rank $n$ vector $G$-bundle $\mcE$ over $X\in \SmS$ a morphism
	\[
	\thc^\cT_\mcE\colon \motive_{\cT}(\Th(\mcE)) \xrightarrow{} \Sigma^n_\T\motive_{\cT}(X)
	\]
	satisfying properties (1), (2) and (3) of Definition~\ref{def:thom_cat}. Then $\thc^\cT_\mcE$ are isomorphisms.
\end{lemma}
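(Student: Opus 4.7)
The plan is to induct on the size of a Zariski trivializing cover of $X$, using the Mayer-Vietoris triangles supplied by Lemma~\ref{lem:tmc_triangles}(1).

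First I would dispose of the trivialized case: if $\theta\colon\mcE\xrightarrow{\simeq}\triv_X^{G,n}$ is an isomorphism of vector $G$-bundles, then property~(2) of Definition~\ref{def:thom_cat} gives $\thc^\cT_\mcE = \thc^\cT_{\triv_X^{G,n}}\circ\motive_\cT(\theta)$. The right factor is the isomorphism induced by $\theta$ on Thom spaces, and the left factor is the canonical isomorphism by property~(1); hence $\thc^\cT_\mcE$ is an isomorphism. Every vector $G$-bundle is Zariski-locally trivial: this is classical for $G=\GL,\SL,\Sp$, and for $G=\SLc$ one picks an affine open $U$ on which both $E$ and $L$ trivialize and then rescales one basis vector of the trivialization of $E|_U$ to absorb the unit $u\in\mathcal{O}(U)^\times$ through which $\lambda|_U$ is identified with the canonical $\SLc$-structure on $\triv_U^{\SLc,n}$.

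For the inductive step, since $X\in\SmS$ is quasi-compact, I fix a finite Zariski cover $X=U_1\cup\cdots\cup U_m$ on which $\mcE$ trivializes and induct on $m$. Set $U=U_1\cup\cdots\cup U_{m-1}$ and $V=U_m$; the cover $\{U_i\cap V\}_{i<m}$ of $U\cap V$ has size $m-1$, so by the inductive hypothesis $\thc^\cT$ is an isomorphism on each of $\mcE|_U$, $\mcE|_V$, and $\mcE|_{U\cap V}$. Lemma~\ref{lem:tmc_triangles}(1) provides Mayer-Vietoris distinguished triangles for $X$ and for $\Th(\mcE)$, connected vertex-by-vertex by the morphisms $\thc^\cT$. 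Property~(3), applied to the open embeddings $U\cap V\hookrightarrow U,V$ and $U,V\hookrightarrow X$, forces the squares involving the first two arrows to commute; the triangulated 5-lemma then yields that $\thc^\cT_\mcE$ is an isomorphism.

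The main subtlety is verifying that $\thc^\cT_\mcE$ also makes the third square — the one involving the connecting morphism — commute, so the 5-lemma applies honestly. This is not immediate from properties (1)-(3) in isolation, but follows from the functoriality of the Mayer-Vietoris construction: the triangles in question are obtained by applying $\gamma^*_\cT$ to cofiber sequences in $\SHS$ coming from homotopy pushout squares of motivic spaces, and property~(3) provides exactly a morphism of such pushout squares, which induces a morphism of the associated cofiber sequences including the connecting maps.
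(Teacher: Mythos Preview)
Your overall strategy---Zariski induction on a trivializing cover via the Mayer--Vietoris triangles of Lemma~\ref{lem:tmc_triangles}(1)---is exactly what the paper intends when it points to \cite[Lemma~3.8]{An20}, and your base case and inductive setup are fine.

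The gap is in the last paragraph. You claim the connecting-morphism square commutes because ``property~(3) provides exactly a morphism of such pushout squares, which induces a morphism of the associated cofiber sequences''. But the maps $\thc^\cT_{\mcE|_{(-)}}$ are abstract morphisms in $\cT$; they are \emph{not} images under $\gamma^*_\cT$ of maps of motivic spaces, and nothing in properties~(1)--(3) says they lift to a model for $\cT$. So you do not have a morphism of homotopy pushout squares in any underlying $\infty$-category, and you cannot invoke functoriality of cones. In a bare triangulated category, commutativity of the first two squares does not force the third to commute, and it does not force your given map $\thc^\cT_\mcE$ to be an isomorphism even when the other two vertical maps are: axiom TR3 only produces \emph{some} fill-in which is then an isomorphism by the $5$-lemma, but $\thc^\cT_\mcE$ need not agree with that fill-in.

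The cohomological proof in \cite[Lemma~3.8]{An20} that the paper refers to sidesteps this because there the Thom map is cup product with a class restricted from $X$, and the Mayer--Vietoris boundary is $A^{*,*}(X)$-linear; that is the mechanism making the $\partial$-square commute. To transplant this to the categorical statement you should use the explicit shape $\thc^\cT_\mcE=(\id_{\motive_\cT(X)}\otimes\Psi(\thc(\mcE)))\circ\Delta$ from the proof of Theorem~\ref{thm:thom_iso_equiv}: the diagonal $\Delta$ \emph{does} come from a map of motivic spaces (hence gives a genuine morphism of Mayer--Vietoris triangles), and the second factor is of the form $\id\otimes(\text{fixed map})$ applied to the triangle for $X$. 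Either recast your argument in these terms, or give an independent reason why the $\partial$-square commutes for an arbitrary rule satisfying (1)--(3).
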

\begin{proof}
	Follows from Lemma~\ref{lem:tmc_triangles}(1) similarly to the proof of \cite[Lemma~3.8]{An20}. 
\end{proof}

\begin{remark} \label{rem:Gysin}
	It follows from the homotopy purity property (Lemma~\ref{lem:tmc_triangles}(2)) that $\cT$ admits (possibly non-normalized and non-functorial) Thom isomorphisms for vector $G$-bundles if and only if for every closed embedding $Z\to X$ in $\SmS$ such that the normal bundle $N_{Z/X}$ admits a structure of a vector $G$-bundle there exists a distinguished triangle
	\[
	\motive_\cT(X-Z)\xrightarrow{\motive_\cT(j)} \motive_\cT(X) \to \Sigma^n_\T\motive_\cT(Z) \to \motive_\cT(X-Z)[1]
	\]
	where $j\colon X-Z\to X$ is the open embedding and $n$ is the codimension of $Z$ in $X$. These triangles are usually referred to as \textit{Gysin triangles}.
\end{remark}

\begin{theorem}\label{thm:thom_iso_equiv}
	Let $G=GL,\,Sp,\,SL$, or $SL^c$ and let $\cT$ be a triangulated motivic category over $S$. Then the following are equivalent:
	\begin{enumerate}
		\item $\cT$ admits normalized Thom isomorphisms for vector $G$-bundles,
		\item the cohomology theory represented by the spectrum $\Ht_\cT \Z\in \SHS$ admits normalized Thom classes for vector $G$-bundles.
	\end{enumerate}
\end{theorem}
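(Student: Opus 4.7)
The strategy is to translate between Thom classes and Thom isomorphisms using the adjunction $\gamma_\cT^*\dashv\gamma_*^\cT$ together with the projection to the base (in one direction) and the diagonal of the base (in the other). The key preliminary step is the identification
\begin{equation*}
(\Ht_\cT\Z)^{2n,n}(\Th(\mcE))=\Hom_{\SHS}(\Sigma^\infty_\T\Th(\mcE),\Sigma^{2n,n}\gamma_*^\cT\gamma^*_\cT\SSp)\cong\Hom_\cT(\motive_\cT(\Th(\mcE)),\Sigma^n_\T\mathbf{1}_\cT),
\end{equation*}
obtained by applying Lemma~\ref{lem:tmc_basic}(1),(4),(5), using $\Sigma^{2n,n}=\Sigma^n_\T$, and the adjunction; here $\mathbf{1}_\cT=\gamma^*_\cT\SSp$ is the tensor unit.

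For the direction (1)$\Rightarrow$(2), given a Thom isomorphism $\thc^\cT_\mcE$ I would define $\thc(\mcE)$ as the class adjoint to the composition
\begin{equation*}
\motive_\cT(\Th(\mcE))\xrightarrow{\thc^\cT_\mcE}\Sigma^n_\T\motive_\cT(X)\xrightarrow{\Sigma^n_\T\motive_\cT(p_X)}\Sigma^n_\T\mathbf{1}_\cT,
\end{equation*}
where $p_X\colon X\to S$ is the structure map. Each axiom of Definition~\ref{def:coh_Thom} then follows from the corresponding axiom of Definition~\ref{def:thom_cat}: the normalization axiom reduces to the observation that composing the canonical suspension identification with $\Sigma^n_\T\motive_\cT(p_X)$ corresponds under the adjunction to $\Sigma^n_\T 1$; functoriality under an isomorphism $\theta$ uses that the automorphism $\widetilde\theta$ commutes with the projection to the base; and pullback compatibility follows from $p_X\circ f=p_Y$.

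For the direction (2)$\Rightarrow$(1), given $\thc(\mcE)$ viewed via the adjunction as a morphism $\motive_\cT(\Th(\mcE))\to\Sigma^n_\T\mathbf{1}_\cT$, I would define $\thc^\cT_\mcE$ as the composition
\begin{equation*}
\motive_\cT(\Th(\mcE))\xrightarrow{\motive_\cT(\Delta)}\motive_\cT(\Th(\mcE))\otimes\motive_\cT(X)\xrightarrow{\thc(\mcE)\otimes\id}\Sigma^n_\T\mathbf{1}_\cT\otimes\motive_\cT(X)\cong\Sigma^n_\T\motive_\cT(X),
\end{equation*}
where $\Delta\colon\Th(\mcE)\to\Th(\mcE)\wedge X_+$ is induced by the diagonal of $X$. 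Properties (2) and (3) of Definition~\ref{def:thom_cat} are then diagram chases using the naturality of $\Delta$ together with properties (2) and (3) of $\thc(\mcE)$; that the $\thc^\cT_\mcE$ so produced are genuine isomorphisms (and not merely morphisms) is then exactly the content of Lemma~\ref{lem:thom_iso}.

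I expect the main obstacle to be verifying the normalization axiom in the (2)$\Rightarrow$(1) direction: one has to check that the diagonal construction applied to $\thc(\triv_X^{G,n})=\Sigma^n_\T 1$ yields the canonical identification $\motive_\cT(\Th(\triv_X^{G,n}))\cong\Sigma^n_\T\motive_\cT(X)$. This is a compatibility check between the diagonal of $X$ and the canonical identification of the Thom space of a trivial bundle with a suspension; it reduces ultimately to the equality $(p_X\times\id_X)\circ\Delta_X=\id_X$ combined with a careful, though routine, unwinding of the suspension isomorphisms of Lemma~\ref{lem:tmc_basic}. With this check in place the two directions become formally inverse up to the adjunction isomorphism from the first paragraph.
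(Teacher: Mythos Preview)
Your proposal is correct and follows essentially the same approach as the paper: in both directions you use the adjunction identification $(\Ht_\cT\Z)^{2n,n}(\Th(\mcE))\cong\Hom_\cT(\motive_\cT(\Th(\mcE)),\Sigma^n_\T\mathbf{1}_\cT)$, combine it with the projection $X\to S$ for $(1)\Rightarrow(2)$ and with the diagonal $\Th(\mcE)\to X_+\wedge\Th(\mcE)$ for $(2)\Rightarrow(1)$, and invoke Lemma~\ref{lem:thom_iso} to upgrade the constructed morphisms to isomorphisms. The only cosmetic difference is the order of the tensor factors in the diagonal map, and your explicit attention to the normalization check in $(2)\Rightarrow(1)$ is something the paper leaves implicit.
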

\begin{proof}
	$(1) \Rightarrow (2)$. Let $\mcE$ be a rank $n$ vector $G$-bundle over $X\in\SmS$. Adjunction combined with Lemma~\ref{lem:tmc_basic} yields a bijection
	\begin{multline*}
	\Hom_{\cT}(\motive_\cT(\Th(\mcE)),\Sigma^n_\T \motive_\cT (X)) \xrightarrow{\simeq}\Hom_{\SHS}(\Sigma^\infty_\T\Th(\mcE),\gamma^\cT_*\Sigma^n_\T \motive_\cT (X))
	\xrightarrow{\simeq}\\
	\xrightarrow{\simeq}
	\Hom_{\SHS}(\Sigma^\infty_\T\Th(\mcE),\Sigma^n_\T \gamma^\cT_* \gamma_\cT^* (\Sigma^\infty_\T X_+))	
	\end{multline*}
	which we denote by $\Phi$. The structure morphism $X\to S$ yields a homomorphism
	\[
	p\colon \Hom_{\SHS}(\Sigma^\infty_\T\Th(\mcE),\Sigma^n_\T\gamma^\cT_* \gamma_\cT^*  (\Sigma^\infty_\T X_+))\to
	\Hom_{\SHS}(\Sigma^\infty_\T\Th(\mcE),\Sigma^n_\T \gamma^\cT_* \gamma_\cT^* (\Sigma^\infty_\T S_+)).
	\]
	Note that $\Sigma^n_\T \gamma^\cT_* \gamma_\cT^* (\Sigma^\infty_\T S_+) = \Sigma^n_\T \gamma^\cT_* \gamma_\cT^* \SSp = \Sigma^n_\T\Ht_{\cT}\Z$ whence we get a homomorphism
	\[
	p\circ \Phi\colon
	\Hom_{\cT}(\motive_\cT(\Th(\mcE)),\Sigma^n_\T \motive_\cT (X)) \to
	(\Ht_{\cT}\Z)^{2n,n}(\Th(\mcE)).
	\]
	Put
	\[
	\thc(\mcE)=p \circ \Phi (\thc^{\cT}_\mcE)\in (\Ht_{\cT}\Z)^{2n,n}(\Th(\mcE)).
	\]
	All the properties from Definition~\ref{def:coh_Thom} are straightforward to check using the corresponding properties from Definition~\ref{def:thom_cat}.
	
	$(2) \Rightarrow (1)$.	Let $\mcE$ be a rank $n$ vector $G$-bundle over $X\in\SmS$. Denote $\Psi$ the bijection
	\[
	(\Ht_{\cT}\Z)^{2n,n}(\Th(\mcE))=\Hom_{\SHS}(\Sigma^\infty_\T \Th(\mcE), \Sigma_\T^n \gamma_*^\cT\gamma^*_\cT \SSp) \xrightarrow{\simeq}
	\Hom_{\cT}(\motive_{\cT}(\Th(\mcE)), \Sigma_\T^n\motive_\cT(S))
	\]
	given by adjunction and Lemma~\ref{lem:tmc_basic} and let
	\[
	\Delta \colon \motive_{\cT}(\Th(\mcE))\to \motive_{\cT}(X) \otimes \motive_{\cT}(\Th(\mcE))
	\]
	be the morphism induced by the diagonal
	\[
	\Sigma^\infty_\T \Th(\mcE)\to \Sigma^\infty_\T X_+ \wedge \Sigma^\infty_\T \Th(\mcE).
	\]
	Put $\thc^{\cT}_{\mcE} =  (\id_{\motive_{\cT}(X)} \otimes \Psi(\thc(\mcE))) \circ \Delta$:
	\[
	\motive_{\cT}(\Th(\mcE))\xrightarrow{\Delta} \motive_{\cT}(X) \otimes \motive_{\cT}(\Th(\mcE)) \xrightarrow{\id_{\motive_{\cT}(X)} \otimes \Psi(\thc(\mcE))} \motive_{\cT}(X) \otimes \Sigma^n_\T\motive_{\cT}(S) = \Sigma^n_\T\motive_{\cT}(X).
	\]
	All the properties from Definition~\ref{def:thom_cat} are straightforward to check using the corresponding properties from Definition~\ref{def:coh_Thom}. Then $\thc^{\cT}_{\mcE}$ is an isomorphism by Lemma~\ref{lem:thom_iso} whence the claim.	
\end{proof}

\begin{corollary} \label{cor:cat_orient_sheaf}
	Let $\cT$ be a triangulated motivic category over $S$ and suppose that the presheaf $(\Ht_{\cT} \Z)^{0,0}(-)$ is a Zariski sheaf on $\SmS$. Then $\cT$ admits normalized Thom isomorphisms for vector $SL^c$-bundles.
\end{corollary}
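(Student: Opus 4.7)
By Theorem~\ref{thm:thom_iso_equiv}, it suffices to construct normalized Thom classes for the cohomology theory represented by $\Ht_\cT \Z$ on vector $\SLc$-bundles. The strategy is to produce such classes locally via $\SLc$-trivializations and then to glue them using the Zariski sheaf hypothesis.

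Given a rank $n$ vector $\SLc$-bundle $\mcE = (E, L, \lambda)$ on $X \in \SmS$, both $L$ and $E$ are Zariski-locally trivial; once $L$ is trivialized on a small enough open $U_i$ the $\SLc$-structure reduces to an $\SL$-structure on $E|_{U_i}$, and since $\SL_n$-torsors are Zariski-locally trivial (one normalizes the determinant by rescaling a single coordinate of a chosen vector-bundle trivialization), we may pick $\SLc$-trivializations $\psi_i \colon \mcE|_{U_i} \xrightarrow{\simeq} \triv_{U_i}^{\SLc, n}$ on some Zariski cover $\{U_i\}$ of $X$. Pulling back the canonical class $\Sigma_\T^n 1 \in (\Ht_\cT \Z)^{2n, n}(\Th(\triv_{U_i}^{\oplus n}))$ along the induced isomorphism of Thom spaces defines a local Thom class $\thc_i \in (\Ht_\cT \Z)^{2n, n}(\Th(\mcE|_{U_i}))$.

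The central point is to show that the $\thc_i$ agree on each overlap $U_{ij}$. The transition $\psi_i \psi_j^{-1}$ is an $\SLc$-automorphism $(g, \mu)$ of $\triv_{U_{ij}}^{\SLc, n}$ with $g \in \GL_n(U_{ij})$, $\mu \in \Gm(U_{ij})$, and $\det g = \mu^2$. Its induced self-map of $\Th(\triv_{U_{ij}}^{\oplus n})$ depends only on $g$, and under the $\T$-suspension isomorphism $(\Ht_\cT \Z)^{2n, n}(\Th(\triv_{U_{ij}}^{\oplus n})) \cong (\Ht_\cT \Z)^{0, 0}(U_{ij})$ the action on $\Sigma^n_\T 1$ becomes multiplication by the image of $\la \det g \ra = \la \mu^2 \ra \in \pi_{0,0}(\SSp)(U_{ij})$ under the unit map $\SSp \to \Ht_\cT \Z$; here $\la u \ra$ denotes the class of the automorphism $x \mapsto u x$ of $\T$. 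The identity $\la \mu^2 \ra = \la \mu \ra^2 = 1$ holds in $\pi_{0,0}(\SSp)$ over a field by Morel's identification with $\GW$ (every square represents the trivial quadratic form), and the Zariski sheaf hypothesis on $(\Ht_\cT \Z)^{0, 0}$ lets us extend this stalk-wise to a general base. Hence the action is trivial and $\thc_i|_{U_{ij}} = \thc_j|_{U_{ij}}$.

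Finally, the Mayer-Vietoris triangles of Lemma~\ref{lem:tmc_triangles}(1) combined with the Zariski sheaf property assemble the compatible local classes into a global Thom class $\thc(\mcE) \in (\Ht_\cT \Z)^{2n, n}(\Th(\mcE))$, independent of the chosen cover and trivializations. Properties (1)--(3) of Definition~\ref{def:coh_Thom} follow from the construction together with the same overlap-agreement argument applied to the isomorphisms and pullbacks occurring in (2) and (3). The main obstacle is the key triviality $\la \mu^2 \ra = 1$ in $(\Ht_\cT \Z)^{0, 0}$: the $\SLc$-hypothesis $\det g = \mu^2$ ensures the obstruction is a square, and the Zariski sheaf hypothesis is exactly what enables the reduction to residue fields where classical $\GW$ arithmetic supplies the conclusion.
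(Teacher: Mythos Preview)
The paper's own proof is a one-line citation: it invokes Theorem~\ref{thm:thom_iso_equiv} together with \cite[Theorem~5.3]{An20}, which supplies the normalized Thom classes for vector $\SLc$-bundles on any commutative ring spectrum $A$ with $A^{0,0}(-)$ a Zariski sheaf. Your proposal is therefore not an alternative argument but rather a sketch of what \cite[Theorem~5.3]{An20} does; the overall architecture (local trivializations, overlap comparison via $\langle\det g\rangle$, gluing) is indeed the right one.

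That said, your justification of the key identity $\langle\mu^2\rangle=1$ has a genuine gap. You write that this holds over a field by Morel's identification $\underline{\pi}_{0,0}(\SSp)\cong\GW$ and that ``the Zariski sheaf hypothesis on $(\Ht_\cT\Z)^{0,0}$ lets us extend this stalk-wise to a general base.'' But the Zariski sheaf condition only lets you test vanishing on a Zariski cover, equivalently on the stalks $\Spec\mathcal{O}_{X,x}$, which are local rings and not residue fields; over a general base $S$ the points of $U_{ij}$ need not even lie in $\SmS$, so there is no reduction to Morel's field computation along these lines. In fact the relation $\langle\mu^2\rangle=1$ already holds in $\pi_{0,0}(\SSp)(X)$ over an arbitrary base: it follows from the universal Milnor--Witt relations (equivalently, from explicit $\A^1$-homotopies realising $\mathrm{diag}(\mu,\mu^{-1})\in\SL_2$ as a product of elementary matrices), and no sheaf hypothesis is needed at this point.

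Where the Zariski sheaf hypothesis actually enters is in the gluing and well-definedness step, which you pass over quickly. Mayer--Vietoris gives existence of a lift of the compatible local classes, but a priori only up to the image of the boundary map; one must argue that the presheaf $U\mapsto(\Ht_\cT\Z)^{2n,n}(\Th(\mcE|_U))$ itself satisfies the sheaf condition on trivializing covers (which, via the suspension isomorphism on trivial bundles, reduces to the hypothesis on $(\Ht_\cT\Z)^{0,0}$) so that the glued class is uniquely determined and independent of the chosen cover and trivializations. This is the substance of \cite[Theorem~5.3]{An20} and is where the hypothesis is genuinely consumed.
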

\begin{proof}
	Follows from \ref{thm:thom_iso_equiv} and \cite[Theorem~5.3]{An20}. 
\end{proof}

\begin{remark}
	Suppose that $S=\Spec k$ is the spectrum of an infinite perfect field of $\operatorname{char}\neq 2$. Then it follows from \cite[Chapter~3, Theorem~4.2.4]{BCDFO20} that $(\HMW\Z)^{0,0}(-)$ is given by the unramified Grothendieck-Witt sheaf of bilinear forms and Corollary~\ref{cor:cat_orient_sheaf} yields the existence of normalized Thom isomorphisms
	\[
	\motive_{\DMW(k)} (\Th(\mcE)) \xrightarrow{\simeq} \Sigma^{n}_\T\motive_{\DMW(k)} (X)
	\]
	in $\DMW(k)$ for vector $\SLc$-bundles recovering \cite[Theorem~6.2]{Y19}.
	
	In a similar way for $S=\Spec k$ being the spectrum of perfect field Theorem~\ref{thm:thom_iso_equiv} combined with the orientation of motivic cohomology yields the existence of normalized Thom isomorphisms 
	\[
	\motive_{\DM(k)} (\Th(E)) \xrightarrow{\simeq} \Sigma^{n}_\T\motive_{\DM(k)} (X)
	\]
	in $\DM(k)$ for vector bundles recovering \cite[Theorem~15.15]{MVW06}.
\end{remark}

\section{First homology of the motivic sphere spectrum} \label{section:homology}

\begin{definition} \label{def:constfunctor}
	The \textit{constant functor} $\constf\colon \SH\to \SHS$ is the functor given by the composition
	\[
	\SH \xrightarrow{c} \mathcal{SH}_{S^1}(S) \xrightarrow{\Sigma_{\Gm}^\infty} \SHS
	\]
	where $c$ takes an $\mathrm{S^1}$-spectrum of simplicial sets to the corresponding $\mathrm{S^1}$-spectrum of constant  presheaves of simplicial sets and $\Sigma_{\Gm}^\infty$ is the suspension functor. Since both $c$ and $\Sigma_{\Gm}^\infty$ are triangulated and monoidal the functor $\constf$ is triangulated and monoidal as well.
	
	The classical sphere
spectrum and Eilenberg-MacLane spectrum in the stable homotopy category $\SH$ are denoted $\SSp_\mathrm{top}$ and $\Ht_\mathrm{top}\Z$, respectively.
\end{definition}

\begin{lemma} \label{lem:const_functor}
	There are canonical isomorphisms 
	\[
	\SSp = \constf(\SSp_{\mathrm{top}}),\quad \ZS = \constf(\ZStop)
	\]
	and $\constf$ takes the unit morphism $\SSp_{\mathrm{top}}\to \ZStop$ to the unit morphism $\SSp\to \ZS$.
\end{lemma}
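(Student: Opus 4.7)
The identification $\SSp = \constf(\SSp_{\mathrm{top}})$ is immediate: as the composite of the strict monoidal functors $c$ and $\Sigma_{\Gm}^\infty$, the constant functor $\constf$ is strict monoidal, hence preserves the tensor unit, and by definition $\SSp$ and $\SSp_{\mathrm{top}}$ are the tensor units of $\SHS$ and $\SH$ respectively.

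For the identification $\ZS = \constf(\ZStop)$, the plan is to factor through the intermediate category $\mathcal{SH}_{S^1}(S)$. At the $\mathrm{S^1}$-spectrum level there is an Eilenberg--MacLane/singular chains adjunction between $\mathcal{SH}_{S^1}(S)$ and the derived category of Nisnevich sheaves of abelian groups on $\SmS$ completely analogous to the classical $\homol_\bullet \dashv \emclane$; its $\A^1$-localised, $\Gm$-stabilised version is precisely $\homol_{\A^1}\dashv \emclane_{\A^1}$. The constant functor $c$ intertwines the classical adjunction with the $\mathrm{S^1}$-level adjunction, because the Dold--Kan/Eilenberg--MacLane construction is defined levelwise and a constant presheaf of simplicial abelian groups is just the constant-sheaf of the underlying simplicial abelian group. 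Consequently $c(\ZStop)$ identifies with the Eilenberg--MacLane $\mathrm{S^1}$-spectrum of the constant Nisnevich sheaf of abelian groups $\Z$, which is automatically $\A^1$-local since it is constant. Applying $\Sigma_{\Gm}^\infty$ and using that $\Sigma_{\Gm}^\infty$ commutes with the Eilenberg--MacLane functors (as its left adjoint $\Omega_{\Gm}^\infty$ commutes with the singular chains side), one obtains
\[
\constf(\ZStop) = \Sigma_{\Gm}^\infty c(\emclane \homol_\bullet \SSp_{\mathrm{top}}) = \emclane_{\A^1}\homol_{\A^1}\constf(\SSp_{\mathrm{top}}) = \emclane_{\A^1}\homol_{\A^1}\SSp = \ZS.
\]

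The statement on unit morphisms then follows by naturality: the classical map $\SSp_{\mathrm{top}}\to \ZStop$ is the adjunction unit of $\homol_\bullet \dashv \emclane$ evaluated at $\SSp_{\mathrm{top}}$, and under the compatibilities established above, applying $\constf$ sends it to the adjunction unit of $\homol_{\A^1}\dashv \emclane_{\A^1}$ at $\SSp$, which is precisely the unit morphism $\SSp \to \ZS$.

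I expect the main obstacle to be the clean verification that $c$ intertwines the classical and $\mathrm{S^1}$-level Eilenberg--MacLane adjunctions, together with the $\A^1$-locality of the Eilenberg--MacLane $\mathrm{S^1}$-spectrum of the constant sheaf $\Z$ before $\Gm$-stabilisation. These are essentially formal facts, but giving a rigorous argument requires fixing compatible model-categorical presentations of $\SH$, $\mathcal{SH}_{S^1}(S)$ and $\SHS$ so that the two adjunctions can be compared as Quillen adjunctions and their units match on the nose.
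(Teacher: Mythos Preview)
Your overall strategy is close to the paper's, and you correctly identify that the crux is a compatibility between the constant functor and the Eilenberg--MacLane/singular-chains adjunctions. The first identification $\SSp=\constf(\SSp_{\mathrm{top}})$ and the naturality argument for the unit morphisms are fine.

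There is, however, a genuine gap at the $\Gm$-stabilisation step. First a slip: $\Omega_{\Gm}^\infty$ is the \emph{right} adjoint of $\Sigma_{\Gm}^\infty$, not the left. More importantly, the reasoning you sketch --- that $\Sigma_{\Gm}^\infty$ commutes with the Eilenberg--MacLane functor because the adjoints commute --- does not go through: $\Sigma_{\Gm}^\infty$ is a left adjoint while $\emclane$ is a right adjoint, so the square you want to commute mixes left and right adjoints and no formal adjoint argument applies. In general $\Sigma_{\Gm}^\infty \circ \emclane_{S^1}$ and $\emclane_{\A^1}\circ \Sigma_{\Gm}^\infty$ are not naturally isomorphic; that they agree on the constant sheaf $\Z$ requires actual input. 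The obstacles you flag at the end (the compatibility of $c$ with the adjunctions, $\A^1$-locality) are the easy parts; the hard part is this stabilisation.

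The paper sidesteps the issue by never factoring through the $S^1$-level. It introduces a linear constant functor $\constf_\Z\colon \mathrm{D}(\mathcal{A}b)\to\DAS$ directly at the $\Gm$-stable level and checks the two squares
\[
\homol_{\A^1}\circ\constf \;=\; \constf_\Z\circ\homol_{\bullet},\qquad \constf\circ\emclane \;=\; \emclane_{\A^1}\circ\constf_\Z
\]
separately. The first holds because all four underlying functors are left Quillen in the injective model structures. The second holds because constant Nisnevich sheaves have vanishing higher cohomology, so the non-derived $\underline{\constf}$ and $\underline{\constf}_\Z$ send projectively fibrant objects to motivically fibrant objects, making all four functors right Quillen in the projective model structures. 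This cohomological vanishing is exactly the non-formal ingredient your argument lacks: it is what replaces the faulty adjoint claim and simultaneously absorbs the $\A^1$-locality and $\Gm$-stability issues in one stroke.
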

\begin{proof}
	The first equality is essentially the definition of $\SSp$.
	
	For the second equality consider the following diagrams.
	\[
	\xymatrix{
	\SHS \ar[r]^{\homol_{\A^1}} & \DAS \\
	\SH \ar[u]^{\constf} \ar[r]^{\homol_{\bullet}} & \mathrm{D}(\mathcal{A}b) \ar[u]_{\constf_\Z}
	}
	\quad 
	\xymatrix{
	\SHS  & \DAS \ar[l]_{\emclane_{\A^1}} \\
	\SH \ar[u]^{\constf}  & \mathrm{D}(\mathcal{A}b) \ar[u]_{\constf_\Z} \ar[l]_{\emclane}
	}	
	\]
	Here ${\constf}_\Z$ is the linear version of ${\constf}$ while the other functors are from Example~\ref{def:a1derived}. We claim that these diagrams commute. Indeed, these diagrams are derived (and $\Gm$-stabilized in the top row) versions of the diagrams
	\[
	\xymatrix{
	\Spt_{S^1}(S) \ar[r]^{\underline{\homol}_{\A^1}} & \mathrm{Ch}_{\bullet}(S) \\
	\Spt_{S^1} \ar[u]^{\underline{\constf}} \ar[r]^{\underline{\homol}_{\bullet}} & \mathrm{Ch}_{\bullet}(\mathcal{A}b) \ar[u]_{\underline{\constf}_\Z}
	}
	\quad 
	\xymatrix{
	\Spt_{S^1}(S)  & \mathrm{Ch}_{\bullet}(S) \ar[l]_(0.4){\underline{\emclane}_{\A^1}} \\
	\Spt_{S^1} \ar[u]^{\underline{\constf}}  & \mathrm{Ch}_{\bullet}(\mathcal{A}b) \ar[u]_{\underline{\constf}_\Z} \ar[l]_{\underline{\emclane}}
	}	
	\]
	where
	\begin{itemize}
		\item $\Spt_{S^1}$ is the classical category of $S^1$-spectra of simplicial sets,
		\item $\mathrm{Ch}_{\bullet}(\mathcal{A}b)$ is the category of chain complexes of abelian groups,
		\item $\Spt_{S^1}(S)$ is the category of $S^1$-spectra of simplicial sheaves,
		\item $\mathrm{Ch}_{\bullet}(S)$ is the category of chain complexes of abelian group sheaves,
		\item ${\underline{\constf}}$ and ${\underline{\constf}}_\Z$ are the constant sheaf functors,
		\item ${\underline{\homol}_\bullet}$ and $\underline{\emclane}$ are the classical singular complex and Eilenberg-Maclane functors,
		\item ${\underline{\homol}_{\A^1}}$ and $\underline{\emclane}_{\A^1}$ are sheaf versions of the classical singular complex and Eilenberg-Maclane functors, i.e. sheafifications of the presheaves given sectionwise by ${\underline{\homol}_\bullet}$ and $\underline{\emclane}$.
	\end{itemize}
	The non-derived diagrams clearly commute. In the injective model structures the functors $\underline{\constf},\, \underline{\constf}_\Z,\, \underline{\homol}_{\bullet},\, \underline{\homol}_{\A^1}$ are left Quillen, whence $\homol_{\A^1} \circ \constf = \constf_\Z  \circ  \homol_{\bullet}$. Recall that constant sheaves have trivial higher Nisnevich cohomology groups whence in the projective model structures the functors $\underline{\constf}$ and $\underline{\constf}_\Z$ take fibrant objects to motivically fibrant objects. Then $\underline{\constf},\, \underline{\constf}_\Z,\, \underline{\emclane},\, \underline{\emclane}_{\A^1}$ are right Quillen in the respective projective model structures whence  $\constf \circ \emclane =  \emclane_{\A^1} \circ \constf_\Z$.

	From the commutativity of the diagrams we have
	\begin{multline*}
	\ZS = \emclane_{\A^1} (\homol_{\A^1} \SSp) = \emclane_{\A^1} (\homol_{\A^1} (\constf(\SSp_{\mathrm{top}}))) = \\
	= \emclane_{\A^1} (\constf_\Z ( \homol_{\bullet} (\SSp_{\mathrm{top}}))) = \constf (  \emclane (\homol_{\bullet} (\SSp_{\mathrm{top}}))) = \constf(\ZStop).
	\end{multline*}
	The last assertion of the lemma also follows from the commutativity of the above diagrams.
\end{proof}

\begin{definition}
	Let $S=\Spec k$ be the spectrum of a field. For $A\in\SHk$ and $i,n\in\Z$ we denote $\underline{\pi}_i(A)_n$ the Nisnevich sheaf on $\Smk$ associated with the presheaf
	\[
	U\mapsto \Hom_{\SHk}(\Sigma^{i-n,-n}\Sigma^\infty_\T U_+, A),\quad U\in\Smk.
	\]
	We refer to $\underline{\pi}_i(A)_n$ as \textit{homotopy sheaves}. Following \cite[Section 5.2]{Mor04} for $j\in\Z$ we put
	\[
	\SHk_{\ge j}=\{A\in \SHk\,|\, \underline{\pi}_{i}(A)_n=0\, \text{for $i<j$, $n\in \Z$}\}
	\]
	for the full subcategory of $\SHk$ consisting of $(j-1)$-connected spectra.
	
	Similarly for $j\in\Z$ we have 
	\[
	\SH_{\ge j}=\{A\in \SH\,|\, \pi_{i}(A)=0\, \text{for $i<j$}\}
	\]	
	the full subcategory of $\SH$ consisting of $(j-1)$-connected spectra.
\end{definition}

\begin{lemma} \label{lem:connectivity}
	Let $S=\Spec k$ be the spectrum of a field. Then for $j\in \Z$ we have 
	\[
	\constf (\SH_{\ge j})\subset \SH(k)_{\ge j}.
	\]
\end{lemma}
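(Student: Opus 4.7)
The plan is to reduce the claim to Morel's stable $\A^1$-connectivity theorem by verifying it on a set of triangulated generators of $\SH_{\ge j}$, then propagating it through the closure operations that both sides share.

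First I would observe that $\constf=\Sigma^\infty_{\Gm}\circ c$ preserves arbitrary coproducts, being a composition of left adjoints: the constant presheaf functor $c$ is left adjoint to evaluation at a geometric point of $S$, and $\Sigma^\infty_{\Gm}$ is left adjoint to $\Omega^\infty_{\Gm}$. Together with the fact that $\constf$ is triangulated (Definition~\ref{def:constfunctor}) and that $\SHk_{\ge j}\subset \SHk$, being the connective part of a $t$-structure, is closed under arbitrary coproducts, extensions and cones (the last via the long exact sequence of homotopy sheaves associated to a distinguished triangle), this implies that the full subcategory
\[
\mc{C}=\{A\in\SH\,\vert\,\constf(A)\in\SHk_{\ge j}\}\subset\SH
\]
is closed under arbitrary coproducts and cones.

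Next I would check that $\Sigma^i\SSp_{\mathrm{top}}\in\mc{C}$ for every $i\ge j$. By Lemma~\ref{lem:const_functor} and the monoidality of $\constf$ we have $\constf(\Sigma^i\SSp_{\mathrm{top}})=\Sigma^{i,0}\SSp$, so the required vanishing $\underline{\pi}_k(\Sigma^{i,0}\SSp)_n=\underline{\pi}_{k-i}(\SSp)_n=0$ for all $k<i$ and $n\in\Z$ amounts exactly to Morel's stable $\A^1$-connectivity theorem asserting $\SSp\in\SHk_{\ge 0}$.

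Finally, I would conclude using the standard fact that $\SH_{\ge j}$ coincides with the smallest full triangulated subcategory of $\SH$ closed under arbitrary coproducts and containing the shifted sphere spectra $\{\Sigma^i\SSp_{\mathrm{top}}\,:\,i\ge j\}$, a consequence of the existence, for every $(j-1)$-connected spectrum, of a cell presentation with cells in dimensions $\ge j$. The closure properties of $\mc{C}$ established above then give $\SH_{\ge j}\subset\mc{C}$, which is the desired inclusion. The only substantive input is Morel's stable $\A^1$-connectivity theorem; the remaining steps are formal consequences of the construction of $\constf$ and of the axioms of the two $t$-structures involved.
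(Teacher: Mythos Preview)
Your strategy is sound and ultimately rests on the same input as the paper---Morel's stable $\A^1$-connectivity theorem---but the paper's proof is a single citation of \cite[Theorem~4.2.10]{Mor04}, which already asserts connectivity preservation in enough generality that no reduction is required. Your argument instead reduces to the connectivity of $\SSp$ alone and propagates via cell presentations; this is a legitimate, more self-contained route, and it makes transparent exactly which formal properties of $\constf$ and of the $t$-structures are being used.

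One genuine slip: the phrase ``smallest full triangulated subcategory of $\SH$ closed under arbitrary coproducts and containing $\{\Sigma^i\SSp_{\mathrm{top}}:i\ge j\}$'' does not describe $\SH_{\ge j}$. A triangulated subcategory containing $\Sigma^j\SSp_{\mathrm{top}}$ also contains all of its desuspensions, so after closing under coproducts you recover all of $\SH$; and of course $\SH_{\ge j}$ is not closed under $[-1]$. What you actually need---and what your cell-presentation remark correctly invokes---is that every object of $\SH_{\ge j}$ is a sequential colimit of iterated cofibers of maps from wedges of spheres $\Sigma^i\SSp_{\mathrm{top}}$ with $i\ge j$. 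Your category $\mc{C}$ is closed under coproducts, cones, and filtered colimits (the last because $\constf$ is a left adjoint and homotopy sheaves commute with filtered colimits), so once the statement is repaired the argument goes through.
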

\begin{proof}
	Follows from \cite[Theorem~4.2.10]{Mor04}.
\end{proof}

\begin{definition}
	For the Hopf element $\eta_{\mathrm{top}}\in \pi_1(\SSp_{\mathrm{top}})$ we put 
	\[
	\eta_{\mathrm{top}}^{\A^1}=\constf(\eta_{\mathrm{top}})\in \underline{\pi}_1(\SSp)_0(S).
	\]
\end{definition}

\begin{theorem} \label{thm:a1homology}
	Let $S=\Spec k$ be the spectrum of a field. Then the following sequence is exact:
	\[
	\underline{\pi}_0(\SSp)_*\xrightarrow{\cup\eta^{\A^1}_{\mathrm{top}}} \underline{\pi}_1(\SSp)_* \xrightarrow{H} \underline{\pi}_1(\ZS)_*\to 0.
	\]
	Here $H$ is induced by the unit morphism $\SSp\to \ZS$.
\end{theorem}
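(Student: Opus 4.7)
The plan is to realize the unit $H\colon \SSp \to \ZS$ as $\constf$ of the classical Hurewicz map and exploit the connectivity behavior of $\constf$ established in Lemma~\ref{lem:connectivity}. By Lemma~\ref{lem:const_functor}, $H = \constf(u_{\mathrm{top}})$ for the topological Hurewicz map $u_{\mathrm{top}}\colon \SSp_{\mathrm{top}} \to \Ht_{\mathrm{top}}\Z$. Let $F_{\mathrm{top}}$ denote its homotopy fiber in $\SH$, so that $\constf(F_{\mathrm{top}})$ is the fiber of $H$ in $\SHk$ since $\constf$ is triangulated. Classically, $\pi_0(F_{\mathrm{top}}) = 0$ and $\pi_1(F_{\mathrm{top}}) \cong \pi_1(\SSp_{\mathrm{top}}) = \Z/2\cdot\eta_{\mathrm{top}}$ via $F_{\mathrm{top}} \to \SSp_{\mathrm{top}}$; fix a lift $\widetilde{\eta}_{\mathrm{top}}\colon \Sigma\SSp_{\mathrm{top}} \to F_{\mathrm{top}}$. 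Since $F_{\mathrm{top}} \in \SH_{\geq 1}$, Lemma~\ref{lem:connectivity} places $\constf(F_{\mathrm{top}}) \in \SHk_{\geq 1}$, so $\underline{\pi}_0(\constf(F_{\mathrm{top}}))_* = 0$ and the long exact sequence of homotopy sheaves immediately delivers surjectivity of $H$ together with the identification $\ker H = \operatorname{im}(f_*\colon \underline{\pi}_1(\constf(F_{\mathrm{top}}))_* \to \underline{\pi}_1(\SSp)_*)$.

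Set $\widetilde{\eta}^{\A^1}_{\mathrm{top}} = \constf(\widetilde{\eta}_{\mathrm{top}})$. Functoriality of $\constf$ immediately gives $f_* \circ (\widetilde{\eta}^{\A^1}_{\mathrm{top}}\cdot) = \eta^{\A^1}_{\mathrm{top}}\cdot$, yielding $\eta^{\A^1}_{\mathrm{top}} \cdot \underline{\pi}_0(\SSp)_* \subseteq \ker H$. The reverse inclusion is the main task, and I would handle it via the Postnikov truncation $F_{\mathrm{top}} \to \tau_{\leq 1}F_{\mathrm{top}} = \Sigma\Ht_{\mathrm{top}}\Z/2$, whose fiber $\tau_{\geq 2}F_{\mathrm{top}}$ lies in $\SH_{\geq 2}$. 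Applying $\constf$ (triangulated) and invoking Lemma~\ref{lem:connectivity} to kill the contributions of $\constf(\tau_{\geq 2}F_{\mathrm{top}})$ in degrees $\leq 1$ yields an isomorphism
\[
p_*\colon \underline{\pi}_1(\constf(F_{\mathrm{top}}))_* \xrightarrow{\cong} \underline{\pi}_1(\constf(\Sigma\Ht_{\mathrm{top}}\Z/2))_* = \underline{\pi}_0(\constf(\Ht_{\mathrm{top}}\Z/2))_*.
\]
Moreover, the composite $p \circ \widetilde{\eta}^{\A^1}_{\mathrm{top}}$ equals $\Sigma u^{\A^1}_2$, where $u^{\A^1}_2 = \constf(u_{\mathrm{top},2})$ for the topological mod-$2$ Hurewicz $u_{\mathrm{top},2}\colon \SSp_{\mathrm{top}} \to \Ht_{\mathrm{top}}\Z/2$: indeed, topologically both $p_{\mathrm{top}} \circ \widetilde{\eta}_{\mathrm{top}}$ and $\Sigma u_{\mathrm{top},2}$ represent the nonzero element of $[\Sigma\SSp_{\mathrm{top}},\Sigma\Ht_{\mathrm{top}}\Z/2] = \Z/2$, and $\constf$ preserves this equality.

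The step I expect to be the main obstacle is the clean bookkeeping of these identifications, but once they are in place the argument concludes as follows. Under $p_*$, the map $\widetilde{\eta}^{\A^1}_{\mathrm{top}}\cdot$ corresponds to $(u^{\A^1}_2)_*\colon \underline{\pi}_0(\SSp)_* \to \underline{\pi}_0(\constf(\Ht_{\mathrm{top}}\Z/2))_*$, so it suffices to show this latter map is surjective. Granting this, every class in $\underline{\pi}_1(\constf(F_{\mathrm{top}}))_*$ has the form $\beta \cdot \widetilde{\eta}^{\A^1}_{\mathrm{top}}$ for some $\beta \in \underline{\pi}_0(\SSp)_*$, and maps under $f_*$ to $\beta \cdot \eta^{\A^1}_{\mathrm{top}} \in \eta^{\A^1}_{\mathrm{top}} \cdot \underline{\pi}_0(\SSp)_*$. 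Surjectivity of $(u^{\A^1}_2)_*$ is itself a connectivity statement: the topological cofiber $C_2$ of $u_{\mathrm{top},2}$ satisfies $\pi_0(C_2) = 0$ since $\pi_0(\SSp_{\mathrm{top}}) = \Z \twoheadrightarrow \pi_0(\Ht_{\mathrm{top}}\Z/2) = \Z/2$, so $C_2 \in \SH_{\geq 1}$. Lemma~\ref{lem:connectivity} then places $\constf(C_2) \in \SHk_{\geq 1}$, and the cofiber sequence $\SSp \xrightarrow{u^{\A^1}_2} \constf(\Ht_{\mathrm{top}}\Z/2) \to \constf(C_2)$ gives the desired $\underline{\pi}_0$-surjectivity, completing the proof.
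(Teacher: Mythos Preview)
Your argument is correct, but the paper's proof is shorter and more direct. Instead of taking the fiber of $u_{\mathrm{top}}$ and then separately handling the image identification via the Postnikov truncation $\tau_{\le 1}F_{\mathrm{top}}\simeq \Sigma\Ht_{\mathrm{top}}\Z/2$ and the mod-$2$ Hurewicz surjectivity, the paper first takes the cofiber of $\eta_{\mathrm{top}}$ and observes that the induced map $f\colon \Cone(\eta_{\mathrm{top}})\to \Ht_{\mathrm{top}}\Z$ is a $1$-equivalence (both sides have vanishing $\pi_1$ and $\pi_0=\Z$), so $\Cone(f)[-1]\in\SH_{\ge 2}$. A single application of Lemma~\ref{lem:connectivity} then gives $\underline{\pi}_1(\constf(\Cone(\eta_{\mathrm{top}})))_*\cong \underline{\pi}_1(\ZS)_*$, and the claimed exact sequence is literally the long exact sequence of homotopy sheaves for the triangle $\SSp[1]\xrightarrow{\eta^{\A^1}_{\mathrm{top}}}\SSp\to \constf(\Cone(\eta_{\mathrm{top}}))$. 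In other words, by building $\eta_{\mathrm{top}}$ into the diagram from the outset, the paper gets exactness at $\underline{\pi}_1(\SSp)_*$ for free from the LES, whereas you have to manufacture it by showing that $\widetilde{\eta}^{\A^1}_{\mathrm{top}}\cdot$ surjects onto $\underline{\pi}_1(\constf(F_{\mathrm{top}}))_*$. Your route does have the side benefit of explicitly identifying $\underline{\pi}_1(\constf(F_{\mathrm{top}}))_*\cong \underline{\pi}_0(\constf(\Ht_{\mathrm{top}}\Z/2))_*$, which is a slightly finer statement than what the theorem asserts.
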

\begin{proof}
	Consider the following diagram in $\SH$.
	\[
	\xymatrix{
	 & & \Cone(f)[-1] \ar[d] \\ 
	 \SSp_{\mathrm{top}}[1] \ar[r]^{\eta_{\mathrm{top}}}  & \SSp_{\mathrm{top}} \ar[dr]_{u} \ar[r] & \Cone(\eta_{\mathrm{top}}) \ar[d]^f \\
	 & & \HZtop
	}
	\]
	Here $u$ is the unit morphism and $f$ is induced by the equality $u\circ \etatop=0$. Recall that $\pi_1(\SSp_{\mathrm{top}})$ is generated by $\etatop$ whence from the long exact sequence of homotopy groups of the horizontal row we get $\pi_1(\Cone(\etatop))=0$. Moreover, $\pi_0(\Cone(\etatop))=\Z$ and
	\[
	\pi_0(f)\colon \pi_0(\Cone(\etatop)) \to \pi_0(\HZtop)
	\]
	is an isomorphism. Thus $\Cone(f)[-1]\in \SH_{\ge 2}$.
	
	Applying $\constf$ to the above diagram and using Lemma~\ref{lem:const_functor} we obtain
	\[
	\xymatrix{
		& & \constf(Cone(f)[-1]) \ar[d] \\ 
		\SSp[1] \ar[r]^{\eta^{\A^1}_{\mathrm{top}}}  & \SSp \ar[dr] \ar[r] & \constf(\Cone(\eta_{\mathrm{top}})) \ar[d]^{\constf(f)} \\
		& & \ZS
	}
	\]	
	with the diagonal morphism being the unit morphism. Lemma~\ref{lem:connectivity} yields that 
	\[
	\constf(\Cone(f)[-1])\in \SHk_{\ge 2}
	\]
	whence it follows from the long exact sequence of homotopy sheaves of the column that
	\[
	\underline{\pi}_1(\constf(f))_*\colon \underline{\pi}_1(\constf(\Cone(\eta_{\mathrm{top}})))_* \to \underline{\pi}_1(\ZS)_*
	\]
	is an isomorphism. The claim follows from the long exact sequence of homotopy sheaves of the row since $\underline{\pi}_0(\SSp[1])_*=0$.
\end{proof}

\begin{remark}
	Theorem~\ref{thm:a1homology} allows one to compute $\underline{\pi}_1(\ZS)_*$ out of the description of $\underline{\pi}_1(\SSp)_*$ given by \cite[Theorem~5.5]{RSO19} and the computation of $\underline{\pi}_0(\SSp)_*$ given by \cite[Corollary~1.25]{Mor12}, see e.g. Corollary~\ref{cor:h1weightm2} below.
\end{remark}

\begin{definition} \label{def:nu}
	Following \cite[Definition~4.7, Remark~4.4]{DI13} and \cite[p190]{Mor12} we denote $\nu\in \underline{\pi}_1(\SSp)_{-2}(S)$ the second motivic Hopf map given by the stabilization of the Hopf construction for $\SL_2=(\SL_2,1)$,
	\[
	S^{7,4}\cong \Sigma^{1,0}(\SL_2 \wedge \SL_2) \xleftarrow{\simeq} \SL_2 * \SL_2 \to \Sigma^{1,0}(\SL_2\times \SL_2) \xrightarrow{\Sigma^{1,0}m} \Sigma^{1,0}\SL_2 \cong S^{4,2},
	\]
	where $m(x,y)=xy$ is the group multiplication.
\end{definition}

\begin{corollary} \label{cor:h1weightm2}
	Let $S=\Spec k$ be the spectrum of a field of exponential characteristic $p\neq 2$. 
	\begin{enumerate}
	\item
	If $p\neq 3$ then there is an isomorphism $\underline{\pi}_1(\ZS)_{-2}(k)[\tfrac{1}{p}]\cong \Z/12\Z$ with a generator given by $H(\nu)$,
	\item
	If $p =3$ then there is an isomorphism $\underline{\pi}_1(\ZS)_{-2}(k)[\tfrac{1}{3}]\cong \Z/4\Z$ with a generator given by $H(\nu)$.
	\end{enumerate}
\end{corollary}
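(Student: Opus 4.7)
The strategy is to specialize Theorem~\ref{thm:a1homology} to $\Spec k$ with weight $-2$ and localize at $p$, obtaining the exact sequence
\[
\K^{\MW}_{-2}(k)\left[\tfrac{1}{p}\right] \xrightarrow{\cup\eta^{\A^1}_{\mathrm{top}}} \underline{\pi}_1(\SSp)_{-2}(k)\left[\tfrac{1}{p}\right] \xrightarrow{H} \underline{\pi}_1(\ZS)_{-2}(k)\left[\tfrac{1}{p}\right] \to 0.
\]
By Morel's identification of the zeroth homotopy module \cite[Corollary~1.25]{Mor12} we have $\underline{\pi}_0(\SSp)_*\cong \K^{\MW}_*$, and $\K^{\MW}_n(k)=\W(k)$ for $n\le -1$, so the leftmost group is $\W(k)[\tfrac{1}{p}]$. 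It remains to analyze the middle group and the cup product.

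For the middle group I invoke \cite[Theorem~5.5]{RSO19}, which after inverting $p$ exhibits $\underline{\pi}_1(\SSp)_{-2}(k)[\tfrac{1}{p}]$ as an extension of a cyclic group generated by the algebraic Hopf map $\nu$ by a Witt-theoretic summand which coincides with the image of multiplication by $\eta^{\A^1}_{\mathrm{top}}$ from $\W(k)[\tfrac{1}{p}]$. Since the classical stable stem $\pi_3^s\cong\Z/24$ has its $3$-part killed by inverting $3$, the cyclic piece is $\Z/24$ when $p\ne 3$ and $\Z/8$ when $p=3$.

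Since the image of $\cup\eta^{\A^1}_{\mathrm{top}}$ tautologically contains the Witt-theoretic summand, it remains to determine its projection to the cyclic summand. For this I use the identification $1=\eta_{\mathrm{alg}}^2$ in $\K^{\MW}_{-2}(k)=\W(k)$ (since $\K^{\MW}_{-n}(k)$ is generated by the $n$-th power of Morel's algebraic $\eta_{\mathrm{alg}}\in\K^{\MW}_{-1}(k)$). Hence $\eta^{\A^1}_{\mathrm{top}}\cdot 1 = \eta^{\A^1}_{\mathrm{top}}\cdot\eta_{\mathrm{alg}}^2$ in $\underline{\pi}_1(\SSp)_{-2}(k)$. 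Under topological realization $\underline{\pi}_1(\SSp)_{-2}(k)\to\pi_3^s$, the right-hand side maps to $\eta_{\mathrm{top}}^3=12\nu_{\mathrm{top}}$ (by Toda's classical relation), while $\nu$ maps to $\nu_{\mathrm{top}}$ by construction. Since realization is injective on the cyclic summand (which is $p$-locally the image of $\pi_3^s[\tfrac{1}{p}]$), the projection of $\eta^{\A^1}_{\mathrm{top}}\cdot 1$ to the cyclic piece is $12\nu$.

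The cokernel is therefore $\la\nu\ra / \la 12\nu\ra$: for $p\ne 3$ this equals $\Z/24 / \la 12 \ra \cong \Z/12$, and for $p=3$ we have $12\equiv 4\pmod 8$ so it equals $\Z/8 / \la 4 \ra \cong \Z/4$; in both cases $H(\nu)$ is a generator. The main technical point is extracting the precise cyclic-plus-Witt decomposition of $\underline{\pi}_1(\SSp)_{-2}(k)[\tfrac{1}{p}]$ from \cite[Theorem~5.5]{RSO19} and matching up the Witt summand with the image of $\cup\eta^{\A^1}_{\mathrm{top}}$; once this is in place the rest of the argument reduces to bookkeeping in Milnor-Witt K-theory and in finite cyclic groups.
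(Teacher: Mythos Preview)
Your overall strategy matches the paper's: specialize Theorem~\ref{thm:a1homology} at weight $-2$ over $\Spec k$, identify the terms, and compute the image of $\cup\eta^{\A^1}_{\mathrm{top}}$. The problem lies in your description of the middle term and of the image. By \cite[Theorem~5.5 and Remark~5.8]{RSO19} one has $\underline{\pi}_1(\SSp)_{-2}(k)[\tfrac{1}{p}]\cong\Z/24\Z[\tfrac{1}{p}]$ with $\nu$ a generator --- there is no nontrivial Witt-theoretic summand at this weight. Consequently your plan of letting such a summand absorb most of the $\W(k)$-contribution does not apply, and you are left needing the image of \emph{all} of $\K^{\MW}_{-2}(k)[\tfrac{1}{p}]\cong\W(k)[\tfrac{1}{p}]$ inside this cyclic group, not just the image of the unit.

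The paper fills this in differently. First, the identity $\eta^2\eta^{\A^1}_{\mathrm{top}}=12\nu$ is taken directly from \cite[Remark~5.8]{RSO19}, so no realization functor --- and hence no restriction to fields admitting a complex embedding --- is needed. Second, the full image $\GW(k)[\tfrac{1}{p}]\cdot 12\nu$ is reduced to $\langle 12\nu\rangle$ by showing $\mathrm{I}(k)\cdot 12\nu=0$: since $\mathrm{I}(k)=\eta\cdot\K^{\MW}_1(k)$ inside $\K^{\MW}_0(k)=\GW(k)$, this becomes $\K^{\MW}_1(k)\cdot 12\eta\nu=0$, which holds by the relation $\eta\nu=0$ of \cite[Theorem~1.4]{DI13}. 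This step is precisely what guarantees that the image, and hence the cokernel, is independent of the arithmetic of $k$; your realization argument, even where it is available, does not supply it.
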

\begin{proof}
	\cite[Theorem~5.5, Remark~5.8]{RSO19} yield that
	\[
	\underline{\pi}_1(\SSp)_{-2}(k)[\tfrac{1}{p}]\cong \Z/24\Z[\tfrac{1}{p}]
	\]
	with $\nu$ being a generator. Consider the exact sequence 
	\[
	\underline{\pi}_0(\SSp)_{-2}(k)[\tfrac{1}{p}] \xrightarrow{\cup\eta_{\mathrm{top}}^{\A^1}} \underline{\pi}_1(\SSp)_{-2}(k)[\tfrac{1}{p}] \xrightarrow{H} \underline{\pi}_1(\ZS)_{-2}(k)[\tfrac{1}{p}]\to 0.
	\]
	from Theorem~\ref{thm:a1homology}. The image of $\cup\eta_{\mathrm{top}}^{\A^1}$
	is given by 
	\[
	\mathrm{K^{MW}_{-2}}(k)[\tfrac{1}{p}]\cdot \eta_{\mathrm{top}}^{\A^1}=\GW(k)[\tfrac{1}{p}]\cdot\eta^2\eta_{\mathrm{top}}^{\A^1}.
	\]
	By \cite[Remark~5.8]{RSO19} we have $\eta^2\eta_{\mathrm{top}}^{\A^1}=12\nu$ in 	$\underline{\pi}_1(\SSp)_{-2}(k)[\tfrac{1}{p}]$ whence 
	\[
	\GW(k)[\tfrac{1}{p}]\cdot\eta^2\eta_{\mathrm{top}}^{\A^1}=\GW(k)[\tfrac{1}{p}]\cdot12\nu.
	\]
	For the fundamental ideal $\mathrm{I}(k)\subset \GW(k)$ we have 
	\[
	\mathrm{I}(k)\cdot 12\nu =\mathrm{K^{MW}_1}(k)\cdot  12\eta\nu = 0
	\]
	by \cite[Theorem~1.4]{DI13} whence the image of $\cup\eta_{\mathrm{top}}^{\A^1}$ is generated by $12\nu$ and the claim follows.
\end{proof}

\begin{lemma}\label{lem:nuh1}
	The element $H(\nu)\in \underline{\pi}_1(\ZS)_{-2}(S)$ is not zero.
\end{lemma}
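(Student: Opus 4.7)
The plan is to read the claim off Corollary~\ref{cor:h1weightm2}, which has already done all of the computational work. Specifically, under the running hypothesis $S = \Spec k$ with $\chark k \neq 2$, the corollary identifies $H(\nu)$ as a generator of the cyclic group $\underline{\pi}_1(\ZS)_{-2}(k)[\tfrac{1}{p}]$, which it shows is isomorphic to $\Z/12\Z$ for $p \neq 3$ and to $\Z/4\Z$ for $p = 3$. Both groups are nontrivial, so $H(\nu)[\tfrac{1}{p}]$ is a nonzero generator in this localisation; consequently $H(\nu)$ itself cannot vanish in $\underline{\pi}_1(\ZS)_{-2}(k)$, which is what we want. (For $\chark k = 2$ the Röndigs-Spitzweck-Østvær input is not directly available and would require a separate treatment, but in the downstream application—obstructing Thom isomorphisms for $\DAS$—it is enough to produce a single base over which $H(\nu)$ is nonzero, so I would simply stick to the case covered by Corollary~\ref{cor:h1weightm2}.)

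All the genuine content—the exact sequence of Theorem~\ref{thm:a1homology}, the presentation of $\underline{\pi}_1(\SSp)_{-2}(k)[\tfrac{1}{p}]$ as $\Z/24\Z[\tfrac{1}{p}]$ with generator $\nu$, the relation $\eta^2 \eta_{\mathrm{top}}^{\A^1} = 12\nu$, and the pinning down of the image of $\cup\eta_{\mathrm{top}}^{\A^1}$ as $\W(k)[\tfrac{1}{p}] \cdot 12\nu = \Z \cdot 12\nu$ via $\mathrm{I}(k)\cdot 12\nu = 0$—has already been absorbed into Corollary~\ref{cor:h1weightm2}. The deduction of Lemma~\ref{lem:nuh1} from it is purely formal: a generator of a nontrivial localisation of an abelian group cannot itself be zero. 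I do not expect any genuine obstacle here; the point of isolating the lemma as a separate statement is simply to have a clean citable form of the nonvanishing for use in the next section.
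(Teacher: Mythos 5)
Your argument for $\operatorname{char} k \neq 2$ is essentially the paper's: after reducing to a field, Corollary~\ref{cor:h1weightm2} exhibits $H(\nu)$ as a generator of a nontrivial cyclic quotient, so it is nonzero. (You should say explicitly that one first base-changes to a geometric point of $S$ — the lemma is stated over a general base — but this is a cosmetic omission.) Also, a small slip: the relevant coefficient ring in the image computation is $\GW(k)$, not $\W(k)$.

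The genuine gap is the characteristic-$2$ case, and your justification for skipping it is not sound. Theorem~\ref{thm:DAS_Thom} is asserted for an arbitrary base $S$, and its proof uses Lemma~\ref{lem:nuh1} over that same $S$: if $\ZS$ over $S$ admitted symplectic Thom classes, Lemma~\ref{lem:nusymp} would force $H(\nu)=0$ in $\underline{\pi}_1(\ZS)_{-2}(S)$. To derive a contradiction for, say, $S = \Spec \mathbb{F}_2$, one needs $H(\nu) \neq 0$ over a geometric point of that $S$, i.e.\ over an algebraically closed field of characteristic $2$ — which is exactly the case your argument omits. It is not ``enough to produce a single base.'' The paper closes this case by a separate argument: after reducing (via Theorem~\ref{thm:a1homology} and $\mathrm{K}^{\mathrm{MW}}_{-2}(\bar k) \cong \Z/2\Z$ for $\bar k$ algebraically closed) to showing $2\nu \neq 0$, it applies the $3$-complete \'etale realization functor of Isaksen--Quick. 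As in \cite[Lemma~3.5]{AFW20}, $\mathrm{L\hat{E}t}(\SL_2) \cong (\mathrm{S}^3)^\wedge_3$, and since $\nu$ is a Hopf construction it realizes to the classical $\nu_{\mathrm{top}}$, a generator of $\pi_3(\SSp)^\wedge_3 \cong \Z/3\Z$; hence $2\nu \neq 0$. This input — detecting $\nu$ $3$-adically via \'etale realization — is a substantive step your proposal is missing.
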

\begin{proof}
	Via base change to a geometric point of $S$ we may assume that $S=\Spec k$ is the spectrum of an algebraically closed field. If $\operatorname{char} k\neq 2$ then the claim follows from Corollary~\ref{cor:h1weightm2}. 
	
	Suppose that $\operatorname{char} k = 2$. Applying Theorem~\ref{thm:a1homology} it is sufficient to show that
	\[
	\nu\notin \mathrm{K^{MW}_{-2}}(k)\cdot \eta_{\mathrm{top}}^{\A^1}\subset \underline{\pi}_1(\SSp)_{-2}(k).
	\]
	Since $k$ is algebraically closed $\mathrm{K^{MW}_{-2}}(k)\cong \Z/2\Z$ thus it is sufficient to show that $2\nu\neq 0$. Put $l=3$ and consider the $l$-complete \etale realization functors $\mathrm{L\hat{E}t}$ of \cite{I04,Q07}. Arguing as in the proof of \cite[Lemma~3.5]{AFW20} one obtains $\mathrm{L\hat{E}t}(\SL_2)\cong (\mathrm{S}^3)^{\wedge}_3$.
	The element $\nu$ is given by the Hopf construction whence $\mathrm{L\hat{E}t}(\nu)=\nu_{\mathrm{top}}$
	for the corresponding element $\nu_{\mathrm{top}}\in\pi_3(\SSp)^{\wedge}_3$ arising from the Hopf invariant one map (see \cite[page 848]{DI13}). Thus $\mathrm{L\hat{E}t}(\nu)\in \pi_3(\SSp)^{\wedge}_3\cong \Z/3\Z$ is a generator and, in particular, $2\mathrm{L\hat{E}t}(\nu)\neq 0$ whence $2\nu\neq 0$.
\end{proof}

\section{Thom isomorphisms and vanishing of $\nu$}

\begin{lemma} \label{lem:nu_as_representation}
	Let $p_+\colon (\SL_2)_+\to S_+$ be the structure morphism and let $q\colon (\SL_2)_+\to (\SL_2,1)$ be the morphism identifying $+$ with $1$. Then under the identification 
	\[
	\pi_{1}(\SSp)_{-2}(S)\cong \Hom_{\SHS}(\Sigma^\infty_\T (\SL_2,1), \SSp)
	\]
	one has
	\[
	\Sigma^2_\T (\nu \circ \Sigma^\infty_\T q) = \Sigma^\infty_\T\widetilde{\nu} - \Sigma^2_\T\Sigma^\infty_\T p_+ \in \Hom_{\SHS}(\Sigma^\infty_\T (\SL_2)_+\wedge \T^{\wedge 2}, \Sigma^\infty_\T \T^{\wedge 2})
	\]
	where $\widetilde{\nu}\colon (\SL_2)_+\wedge \T^{\wedge 2} \to \T^{\wedge 2}$ is the morphism induced by the canonical action $\SL_2\times \A^2\to \A^2$. Moreover, $\nu$ is the unique element of $\pi_{1}(\SSp)_{-2}(S)$ satisfying this equality.
\end{lemma}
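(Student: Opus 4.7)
The plan is to exploit the stable splitting $\Sigma^\infty_\T(\SL_2)_+\simeq \SSp\vee \Sigma^\infty_\T(\SL_2,1)$ induced by the identity section $e$ and the structure map $p$ (which satisfy $p_+\circ (e)_+ = \id_{S_+}$), and to identify the two resulting components of $\widetilde{\nu}$. Because the $\SL_2$-action on $\A^2$ at $g=1$ is the identity of $\A^2$, both $\Sigma^\infty_\T\widetilde{\nu}$ and $\Sigma^2_\T\Sigma^\infty_\T p_+$ restrict to $\id_{\T^{\wedge 2}}$ along the section $\Sigma^\infty_\T(e)_+\wedge \id_{\T^{\wedge 2}}$. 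Consequently their difference vanishes on the $\SSp$-summand and factors uniquely through $\Sigma^\infty_\T q\wedge \id_{\T^{\wedge 2}}$ as a morphism
\[
\phi\colon \Sigma^\infty_\T(\SL_2,1)\wedge \T^{\wedge 2}\to \Sigma^\infty_\T \T^{\wedge 2},
\]
and it suffices to prove $\phi = \Sigma^2_\T\nu$.

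To identify $\phi$ I would fix $v_0\in \A^2-0$ and use the orbit map $o\colon \SL_2\to \A^2-0$, $g\mapsto g\cdot v_0$, which is an $\A^1$-equivalence (its fibres are copies of $\A^1$) and which intertwines the multiplication $m$ on $\SL_2$ with the standard action $\SL_2\times (\A^2-0)\to \A^2-0$. Combined with the cofiber sequence $\A^2-0\to \A^2\to \T^{\wedge 2}$ and the $\A^1$-contractibility of $\A^2$, one obtains an $\A^1$-equivalence $\Sigma^\infty_\T \T^{\wedge 2}\simeq \Sigma^{1,0}\Sigma^\infty_\T(\A^2-0)$ under which $\Sigma^\infty_\T\widetilde{\nu}$ corresponds to the $\Sigma^{1,0}$-suspension of the pointed multiplication $\Sigma^\infty_\T m_+$. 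Passing to the reduced summand via the splitting of $\Sigma^\infty_\T(\SL_2)_+$ identifies $\phi$ with $\Sigma^{1,0}m_{\mathrm{red}}$, where
\[
m_{\mathrm{red}}\colon \Sigma^\infty_\T(\SL_2,1)\wedge \Sigma^\infty_\T(\SL_2,1)\to \Sigma^\infty_\T(\SL_2,1)
\]
is the reduced multiplication. On the other hand, unpacking Definition~\ref{def:nu} together with the equivalence $\SL_2 * \SL_2 \simeq \Sigma^{1,0}((\SL_2,1)\wedge(\SL_2,1))$ and the stable splitting of $\Sigma^\infty_\T(\SL_2\times\SL_2)_+$ identifies $\nu$ as a stable map with $m_{\mathrm{red}}$, so $\Sigma^2_\T\nu = \Sigma^{1,0}m_{\mathrm{red}} = \phi$.

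For uniqueness, note that $\Sigma^2_\T$ is an auto-equivalence of $\SHS$ by Lemma~\ref{lem:tmc_basic}, and $\Sigma^\infty_\T q$ admits a stable section coming from the splitting, so if $\nu'$ also satisfied the equation one would first obtain $\nu'\circ \Sigma^\infty_\T q = \nu\circ \Sigma^\infty_\T q$ and then $\nu' = \nu$ upon precomposing with the section. The main obstacle is the identification $\nu \leftrightarrow m_{\mathrm{red}}$, the motivic analogue of the classical topological fact that the Hopf construction of an $H$-space multiplication is the suspension of the reduced product; it requires careful bookkeeping of how the orbit-map $\A^1$-equivalence interacts with $\SL_2$-equivariance after passing to the cofiber $\T^{\wedge 2}$.
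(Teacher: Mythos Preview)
Your plan is essentially the same as the paper's proof. Both arguments reduce to the stable splitting of $(\SL_2)_+$, translate $\widetilde{\nu}$ into the $\Sigma^{1,0}$-suspension of the multiplication via the $\A^1$-equivalence $\SL_2\to \A^2-0$ and the identification $\Sigma^{1,0}(\A^2-0)\simeq \T^{\wedge 2}$, and then compare with the Hopf-construction definition of $\nu$ on the reduced summand; the uniqueness argument is also the same. The paper packages all of this into one explicit commutative diagram and invokes \cite[Appendix~A and~B]{DI13} to show that the join-induced section $\chi\colon (\SL_2,1)\wedge \T^{\wedge 2}\to (\SL_2)_+\wedge \T^{\wedge 2}$ really is the inverse of $(\Sigma^2_\T q,\Sigma^2_\T p_+)$, which is precisely the ``careful bookkeeping'' you flag as the main obstacle. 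So there is no genuinely different idea in your proposal; what remains is to make explicit that the two a priori different sections of $q$ --- the one coming from the abstract stable splitting and the one coming from the join $\SL_2*\SL_2$ in the Hopf construction --- coincide, and for that you will in practice need the same diagram and the same input from \cite{DI13} that the paper uses.
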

\begin{proof}
	The uniqueness follows since
	\[
	(\Sigma^\infty_\T q, \Sigma^\infty_\T p_+)\colon \Sigma^\infty_\T (\SL_2)_+ \to \Sigma^\infty_\T (\SL_2,1) \oplus \SSp
	\]
	is an isomorphism, whence 
	\[
	\Sigma^2_\T (- \circ \Sigma^\infty_\T q)\colon \Hom_{\SHS}(\Sigma^\infty_\T (\SL_2,1), \SSp) \to \Hom_{\SHS}(\Sigma^\infty_\T (\SL_2)_+\wedge \T^{\wedge 2}, \Sigma^\infty_\T \T^{\wedge 2})
	\]
	is injective.
	
	In order to prove the equality consider the following diagram.
	\[
	\xymatrix{
	\Sigma^{1,0}(\SL_2\wedge \SL_2) \ar[d]_{\simeq} & \SL_2* \SL_2 \ar[l]_(0.4){\simeq} \ar[r] \ar[d]_{\simeq} & \Sigma^{1,0}(\SL_2 \times \SL_2) \ar[r] \ar[d]_{\simeq} & \Sigma^{1,0}\SL_2 \ar[d]_{\simeq} \\
	\Sigma^{1,0}(\SL_2\wedge (\A^2-0)) \ar[d]_{\simeq} & \SL_2* (\A^2-0) \ar[l]_(0.4){\simeq} \ar[r] & \Sigma^{1,0}(\SL_2 \times (\A^2-0)) \ar[r] \ar[d]_{\phi}  & \Sigma^{1,0}(\A^2-0) \ar[d]_{=} \\
	\SL_2\wedge \Sigma^{1,0}(\A^2-0) \ar[d]_{\id\wedge \psi}^\simeq & &  (\SL_2)_+\wedge \Sigma^{1,0}(\A^2-0) \ar[r] \ar[d]_{\id \wedge \psi}^\simeq & \Sigma^{1,0}(\A^2-0) \ar[d]_{\psi}^\simeq\\	
	\SL_2\wedge \T^{\wedge 2}& &  (\SL_2)_+\wedge \T^{\wedge 2} \ar[r]^(0.6){\widetilde{\nu}} \ar[ll]_{q\wedge \id} & \T^{\wedge 2} \\	
	}
	\]
	Here
	\begin{itemize}
		\item 
		$\SL_2=(\SL_2,1)$, $\A^2-0 = (\A^2-0, (1,0))$,
		\item 
		the first row is the one from Definition~\ref{def:nu},
		\item
		the second row is the first one with one copy of $\SL_2$ changed to $\A^{2}-0$ and with the last morphism being the $\Sigma^{1,0}$-suspension of the canonical action $\SL_2\times(\A^2-0)\to \A^2-0$,
		\item 
		the morphisms between the first and the second rows are given by the projection to the first column $\SL_2\to \A^2-0$ which is an $\A^1$-equivalence (the morphism is a Zariski locally trivial fibration with the fiber being $\A^1$),
		\item 
		$\phi$ is the quotient morphism
		\[
		\xymatrix{
		\Sigma^{1,0}(\SL_2 \times (\A^2-0))	\ar[d]^= & (\SL_2)_+\wedge \Sigma^{1,0}(\A^2-0)) \ar[d]^=\\
		\frac{\SL_2\times (\A^2-0) \times I}{\left(\SL_2\times (\A^2-0) \times \partial I\right)\cup (1\times (1,0) \times I)} \ar[r] &
		\frac{\SL_2\times (\A^2-0) \times I}{\left(\SL_2\times (\A^2-0) \times \partial I\right)\cup (\SL_2\times (1,0) \times I)}
		}
		\]
		with $I$ being the simplicial interval,
		\item 
		$\psi\colon \Sigma^{1,0}(\A^2-0)\to \T^{\wedge 2}$ is the canonical $\A^1$-equivalence that agrees with the $\SL_2$-action.
	\end{itemize}
	It is straightforward to check that the diagram commutes. Let
	\[
	\chi = (\id \wedge \psi)\circ \phi \circ \hdots \circ (\id\wedge \psi)^{-1}\colon \SL_2\wedge \T^{\wedge 2} \to (\SL_2)_+\wedge \T^{\wedge 2}
	\]
	be the morphism given by the clockwise composition in the bottom-left square of the diagram (taking inverses of the corresponding isomorphisms if needed) and let $i\colon \T^{\wedge 2} \to (\SL_2)_+\wedge \T^{\wedge 2}$ be the morphism induced by the inclusion $1\to \SL_2$. Following \cite[Appendix~A and~B]{DI13} one obtains that 
	\[
	\Sigma^\infty_\T (\chi, i)\colon \Sigma^\infty_\T \SL_2\wedge \T^{\wedge 2} \oplus \Sigma^\infty_\T \T^{\wedge 2} \to \Sigma^\infty_\T (\SL_2)_+\wedge \T^{\wedge 2}
	\]
	is the isomorphism inverse to
	\[
	\Sigma^\infty_\T (\Sigma^2_\T q, \Sigma^2_\T p_+)\colon \Sigma^\infty_\T (\SL_2)_+\wedge \T^{\wedge 2} \to \Sigma^\infty_\T \SL_2\wedge \T^{\wedge 2} \oplus \Sigma^\infty_\T \T^{\wedge 2}.
	\]
	This combined with the commutativity of the above diagram yields
	\begin{gather*}
	\Sigma^2_\T (\nu \circ \Sigma^\infty_\T q) \circ \Sigma^\infty_\T \chi = \Sigma^2_\T \nu = \Sigma^\infty_\T (\widetilde{\nu} \circ \chi) = \Sigma^\infty_\T\widetilde{\nu} \circ \Sigma^\infty_\T\chi - \Sigma^2_\T\Sigma^\infty_\T p_+ \circ \Sigma^\infty_\T \chi,\\
	\Sigma^2_\T (\nu \circ \Sigma^\infty_\T q) \circ \Sigma^\infty_\T i = \Sigma^2_\T \nu \circ \Sigma^\infty_\T \Sigma^2_\T q \circ \Sigma^\infty_\T i = 0, \\ 
	(\Sigma^\infty_\T \widetilde{\nu} - \Sigma^2_\T\Sigma^\infty_\T p_+) \circ \Sigma^\infty_\T i =\Sigma^\infty_\T \id_{\T^{\wedge 2}} - \Sigma^\infty_\T \id_{\T^{\wedge 2}} =0.
	\end{gather*}
	Thus the necessary equality holds composed with $(\Sigma^\infty_\T \chi, \Sigma^\infty_\T i)$. Since $(\Sigma^\infty_\T \chi, \Sigma^\infty_\T i)$ is an isomorphism the claim follows.
\end{proof}

\begin{remark}
	Roughly speaking, Lemma~\ref{lem:nu_as_representation} says that up to suspension and canonical isomorphisms between motivic spheres the Hopf map $\nu$ is the action $(\SL_2)_+\wedge \T^{\wedge 2} \to \T^{\wedge 2}$ projected to the summand $(\SL_2,1)$ in the decomposition $\Sigma^\infty_\T (\SL_2)_+ = \Sigma^\infty_\T (\SL_2,1) \oplus \SSp$.
\end{remark}

\begin{lemma}\label{lem:nusymp}
	Let $A\in \SHS$ be a commutative ring spectrum. Suppose that the cohomology theory represented by $A$ admits normalized Thom classes for symplectic bundles (i.e. vector $Sp$-bundles in the notation of Definition~\ref{def:coh_Thom}). Then $A(\nu)=0$ where $A(\nu)\in \pi_1(A)_{-2}(S)$ is the image of $\nu$ under the homomorphism
\[
\pi_1(\SSp)_{-2}(S)\to \pi_1(A)_{-2}(S)
\]
induced by the unit morphism $\SSp\to A$.
\end{lemma}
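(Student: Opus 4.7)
My plan is to exploit the identity $\SL_2 = \Sp_2$: the $\SL_2$-action on $\A^2$ underlying the description of $\widetilde{\nu}$ in Lemma~\ref{lem:nu_as_representation} is through automorphisms of the standard symplectic form, so it induces an automorphism of a trivial symplectic bundle to which Lemma~\ref{lem:aut_gbundle} applies, forcing the relevant cohomology contribution of $\widetilde{\nu}$ to coincide with that of the trivial projection.

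First I would introduce the map
\[
\alpha \colon \SL_2 \times \A^2 \to \SL_2 \times \A^2, \qquad (g,v)\mapsto (g, gv),
\]
viewed as an automorphism of the trivial rank $2$ vector bundle over $\SL_2$. Since the standard linear $\SL_2$-action on $\A^2$ preserves the standard symplectic form (this is the content of $\SL_2 = \Sp_2$), the morphism $\alpha$ is in fact an automorphism of the trivial symplectic bundle $\triv_{\SL_2}^{\Sp,2}$. I denote by $\widetilde{\alpha}$ the induced automorphism of the Thom space $\Th(\triv_{\SL_2}^{\Sp,2}) = (\SL_2)_+ \wedge \T^{\wedge 2}$. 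The composition $\pi\circ \alpha\colon \SL_2\times \A^2\to \A^2$ being the canonical action that defines $\widetilde{\nu}$ in Lemma~\ref{lem:nu_as_representation}, one has
\[
\Sigma^\infty_\T \widetilde{\nu} = \Sigma^2_\T \Sigma^\infty_\T p_+ \circ \Sigma^\infty_\T \widetilde{\alpha}
\]
in $\SHS$. The hypothesis on $A$ now enters via Lemma~\ref{lem:aut_gbundle}: the endomorphism $\widetilde{\alpha}^A$ of $A^{*,*}((\SL_2)_+\wedge \T^{\wedge 2}) = A^{*,*}(\Th(\triv_{\SL_2}^{\Sp,2}))$ is the identity.

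Next I would post-compose the equality of Lemma~\ref{lem:nu_as_representation} with the unit $u\wedge \id_{\T^{\wedge 2}}\colon \Sigma^\infty_\T \T^{\wedge 2} \to A\wedge \T^{\wedge 2}$, yielding an equality in $A^{4,2}((\SL_2)_+\wedge \T^{\wedge 2}) = \Hom_{\SHS}(\Sigma^\infty_\T (\SL_2)_+ \wedge \T^{\wedge 2}, A\wedge \T^{\wedge 2})$ of the form
\[
\Sigma^2_\T (A(\nu) \circ \Sigma^\infty_\T q) = [\widetilde{\nu}] - [\Sigma^2_\T p_+],
\]
where $[\cdot]$ denotes the class obtained by post-composition with the unit. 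The factorization $\widetilde{\nu} = \Sigma^2_\T p_+ \circ \widetilde{\alpha}$ rewrites the right-hand side as $\widetilde{\alpha}^A([\Sigma^2_\T p_+]) - [\Sigma^2_\T p_+]$, which vanishes by the first step.

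Finally, the injectivity argument used in Lemma~\ref{lem:nu_as_representation} — that $(\Sigma^\infty_\T q,\Sigma^\infty_\T p_+)$ provides a splitting $\Sigma^\infty_\T (\SL_2)_+ \cong \Sigma^\infty_\T (\SL_2,1)\oplus \SSp$ — applies verbatim with $\SSp$ replaced by $A$, giving injectivity of
\[
\Sigma^2_\T(-\circ \Sigma^\infty_\T q)\colon \Hom_{\SHS}(\Sigma^\infty_\T (\SL_2,1), A) \to \Hom_{\SHS}(\Sigma^\infty_\T (\SL_2)_+\wedge \T^{\wedge 2}, A\wedge \T^{\wedge 2}),
\]
so the previous vanishing forces $A(\nu)=0$. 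The only real content is the first step — recognizing $\widetilde{\nu}$ as induced by a symplectic automorphism, which is precisely where the hypothesis on $A$ is used; everything else is a formal manipulation of the identity in Lemma~\ref{lem:nu_as_representation}.
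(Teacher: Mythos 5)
Your proof is correct and follows essentially the same route as the paper's: you use the same factorization $\widetilde{\nu}=\Sigma^2_\T p_+\circ\widetilde{\alpha}$ through the symplectic automorphism $(g,v)\mapsto(g,gv)$ of $\triv_{\SL_2}^{\Sp,2}$, invoke Lemma~\ref{lem:aut_gbundle} to get $\widetilde{\alpha}^A=\id$, and feed this into the identity from Lemma~\ref{lem:nu_as_representation}. The only difference is that you spell out the post-composition with the unit and the injectivity of $\Sigma^2_\T(-\circ\Sigma^\infty_\T q)$ with target $A$, which the paper leaves implicit in the phrase ``in view of Lemma~\ref{lem:nu_as_representation}''.
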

\begin{proof}
	In view of Lemma~\ref{lem:nu_as_representation}	it is sufficient to show that
	\[
	\widetilde{\nu}^A(\Sigma^2_\T 1) = (\Sigma^2_\T p_+)^A(\Sigma^2_\T 1) \in A^{4,2}((\SL_2)_+\wedge \T^{\wedge 2}).
	\]
	in the notation of Lemma~\ref{lem:nu_as_representation}. Note that $(\SL_2)_+\wedge \T^{\wedge 2} = \Th(\triv^{\oplus 2}_{\SL_2})$ and consider the following diagram.
	\[
	\xymatrix{
	\Th(\triv^{\oplus 2}_{\SL_2}) \ar[r]^(0.45)= \ar[d]^{\widetilde{\theta}} &  	(\SL_2)_+\wedge \T^{\wedge 2} \ar[r]^(0.65){\widetilde{\nu}} & \T^{\wedge 2} \ar[d]^= \\
	\Th(\triv^{\oplus 2}_{\SL_2}) \ar[r]^(0.45)= &  	(\SL_2)_+\wedge \T^{\wedge 2} \ar[r]^(0.65){\Sigma^2_\T p_+} & \T^{\wedge 2} 
	}
	\]
	Here $\widetilde{\theta}$ is induced by the morphism 
	\[
	\theta\colon \SL_2\times \A^2\to \SL_2\times \A^2, \quad \theta(g,v)=(g,gv).
	\]
	The diagram clearly commutes. The morphism $\theta$ may be regarded as an automorphism of the trivialized rank $2$ symplectic bundle $\triv^{\Sp,2}_{\SL_2}$ over $\SL_2$ thus applying Lemma~\ref{lem:aut_gbundle} we get $\widetilde{\theta}^A=\id$ and
	\[
	\widetilde{\nu}^A(\Sigma^2_\T 1) = \widetilde{\theta}^A (\Sigma^2_\T p_+)^A (\Sigma^2_\T 1)  = (\Sigma^2_\T p_+)^A(\Sigma^2_\T 1). \qedhere 
	\]
\end{proof}

\begin{remark}
	The above lemma is a symplectic counterpart of the well known statement that if a commutative ring spectrum $A\in\SHS$ represents a cohomology theory admitting Thom isomorphisms for arbitrary vector bundles then $A(\eta)=0$.
\end{remark}

\begin{theorem} \label{thm:DAS_Thom}
	Let $G=GL,\,Sp,\,SL$, or $SL^c$. Then 
	\begin{enumerate}
		\item 
		the cohomology theory represented by $\ZS$ does not admit normalized Thom classes for vector $G$-bundles,
		\item 
		$\DAS$ does not admit normalized Thom isomorphisms for vector $G$-bundles.
	\end{enumerate}		
\end{theorem}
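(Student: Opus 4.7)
The plan is to reduce (2) to (1) via Theorem~\ref{thm:thom_iso_equiv}, and then to derive (1) for every choice of $G$ by a mild extension of Lemma~\ref{lem:nusymp}, contradicting Lemma~\ref{lem:nuh1}.

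Since $\Ht_{\DAS}\Z=\ZS$, Theorem~\ref{thm:thom_iso_equiv} applied to $\cT=\DAS$ makes (1) and (2) equivalent, so I focus on (1). For $G=Sp$ this is immediate from Lemma~\ref{lem:nusymp} together with Lemma~\ref{lem:nuh1}: if $\ZS$ admitted normalized Thom classes for symplectic bundles then $\ZS(\nu)=H(\nu)=0$, which is false.

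For $G=GL,\,SL,\,SL^c$ the plan is to observe that the proof of Lemma~\ref{lem:nusymp} goes through verbatim. The only input of that proof which is specific to symplectic structures is the claim that the automorphism $\theta(g,v)=(g,gv)$ of $\triv^{\oplus 2}_{\SL_2}$ lifts to an automorphism of the trivialized $G$-bundle $\triv^{G,2}_{\SL_2}$; this is what allows Lemma~\ref{lem:aut_gbundle} to force $\widetilde{\theta}^A=\id$. For $G=GL$ this is tautological, and for $G=SL$ and $G=SL^c$ it follows from $\det g=1$ for $g\in\SL_2$, which makes the map induced by $\theta$ on $\det\triv^{\oplus 2}_{\SL_2}$ the identity and hence compatible with the canonical trivialization (resp.\ square-root structure). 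The decomposition supplied by Lemma~\ref{lem:nu_as_representation} then yields $\ZS(\nu)=0$ under the Thom-class hypothesis, producing the same contradiction with Lemma~\ref{lem:nuh1}.

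The only potentially nontrivial step is the verification that $\theta$ preserves the $G$-structure, which reduces to the elementary observation that $\SL_2\subset GL_2$ preserves every structure on $\A^2$ under consideration (the determinant, its square root, and the standard symplectic form). Beyond that uniform one-line check, no further conceptual obstacle arises, and the theorem follows from the machinery already established.
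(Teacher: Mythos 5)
Your proof is correct and essentially matches the paper's, which simply invokes Lemmas~\ref{lem:nuh1} and~\ref{lem:nusymp} together with Theorem~\ref{thm:thom_iso_equiv} but leaves the cases $G\neq\Sp$ to the reader. Your observation that $\theta(g,v)=(g,gv)$ with $g\in\SL_2$ preserves each of the $GL$-, $SL$-, $SL^c$-, and $Sp$-structures on $\triv^{\oplus 2}_{\SL_2}$, so the argument of Lemma~\ref{lem:nusymp} applies verbatim for every $G$, is the natural way to supply the implicit step; an equally valid alternative would be to note that normalized Thom classes for $G$-bundles restrict to normalized Thom classes for symplectic bundles along the canonical functor $\Sp\text{-bundles}\to G\text{-bundles}$ (sending $\triv^{\Sp,n}$ to $\triv^{G,n}$), reducing all cases to $G=\Sp$.
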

\begin{proof}
	The first item follows from Lemmas~\ref{lem:nuh1} and~\ref{lem:nusymp}, the second item follows from the first one and Theorem~\ref{thm:thom_iso_equiv}.
\end{proof}

\begin{remark}
	Suppose that $S$ satisfies \cite[Hypoth\`ese~7.3]{Ayo14}, e.g. $S$ is a variety over a characteristic zero field $k$ of finite cohomological dimension. Then \cite[Remarque~11.3]{Ayo14} yields that the category of \'etale motivic sheaves $\mathrm{D}_{\A^1,\et}(S)$ that can be recovered from $\DAS$ imposing \'etale descent admits normalized Thom isomorphisms for vector bundles and, consequently, for vector bundles with an additional structure.
\end{remark}

\end{document}